\newcommand{\ds}{\displaystyle }
\newtheorem{thm}{Theorem}[section]
\newtheorem{Proposition}[thm]{Proposition}
\newtheorem{rmk}[thm]{Remark} 
\newcommand{\beq}{\begin{equation}}
\newcommand{\eeq}{\end{equation}}
\begin{document}
\author{Thomas M. Michelitsch  \orcidlink{0000-0001-7955-6666} $^a$, Giuseppe D'Onofrio \orcidlink{0000-0002-6155-3114} $^b$, \\ Federico Polito \orcidlink{0000-0003-1971-214X} $^c$, Alejandro P. Riascos \orcidlink{0000-0002-9243-3246} $^d$
\\[1ex]
\footnotesize{$^a$ Sorbonne Universit\'e, Institut Jean le Rond d’Alembert,
CNRS UMR 7190} \\
\footnotesize{4 place Jussieu, 75252 Paris cedex 05, France} \\
\footnotesize{E-mail: michel@lmm.jussieu.fr} \,\, (corresponding) \\[1ex]
\footnotesize{$^b$ Department of Mathematical Sciences, Politecnico di Torino, 10129 Torino, Italy}  \\
\footnotesize{E-mail: giuseppe.donofrio@polito.it}\\[1ex]
\footnotesize{$^c$ Department of Mathematics ``G.\ Peano'', University of Torino, 10123 Torino, Italy}  \\
\footnotesize{E-mail: federico.polito@unito.it}\\[1ex]
\footnotesize{$^d$ Departamento de Física, Universidad Nacional de Colombia, Bogot\'a, Colombia}  \\
\footnotesize{E-mail: alperezri@unal.edu.co} 
}
\title{Random walks with stochastic resetting in complex networks: a discrete time approach}
\maketitle
\begin{abstract}
We consider a discrete-time Markovian random walk with resets on a connected undirected network. The resets, in which the walker is relocated to randomly chosen nodes, are governed by  an independent discrete-time renewal process. Some nodes of the network are target nodes, and we focus on the statistics of first hitting of these nodes. In the non-Markov case of the renewal process, we consider both light- and fat-tailed inter-reset distributions.
We derive the propagator matrix in terms of discrete backward recurrence time PDFs and in the light-tailed case
we show the existence of a non-equilibrium steady state.
In order to tackle the non-Markov scenario, we derive
a defective propagator matrix
which describes an auxiliary walk characterized by killing the walker as soon as it hits target nodes. This propagator provides the information on the mean first passage statistics to the target nodes. 
We establish sufficient conditions for ergodicity of the walk under resetting. 
Furthermore, we discuss a generic resetting mechanism for which the walk is non-ergodic.
Finally, we analyze inter-reset time distributions with infinite mean where we focus on the Sibuya case.
We apply these results to study the mean first passage times for Markovian and non-Markovian (Sibuya) renewal resetting protocols in realizations of Watts-Strogatz and Barabási–Albert random graphs.
We show non trivial behavior of the dependence of the mean first passage time on the proportions of the relocation nodes, target nodes and of the resetting rates. It turns out that, in the large-world case of the Watts-Strogatz graph, the efficiency of a random searcher particularly benefits from the presence of resets.
\end{abstract}
%
%
%
%


\section*{Lead paragraph}

{\it 
Dynamics with stochastic resetting (SR) occurs whenever the time evolution of a phenomenon is characterized by repeated relocations that happen randomly in time, according to a certain mechanism.
SR is ubiquitous in nature and society: in foraging, an animal undertakes repeated excursions from its lair to search for food;
in biochemistry, when certain biomolecules such as proteins are searching for binding sites; problem-solving strategies; financial markets recurrently hit by crises. Resets may represent catastrophic events such as earthquakes, volcanic eruptions or wood fires after which flora and fauna restart to develop, or seasonal hurricanes forcing human societies to reconstruct infrastructures.
The first three mentioned examples fall into the category of \emph{random search with resetting}. In this respect, 
a major issue is whether random search strategies are improved when combining them with resetting.
In many cases, resetting can indeed significantly reduce the time for reaching a target
and it has become a matter of fundamental interest to model such resetting strategies. 

In this paper we consider the behavior of a discrete-time Markov walker moving randomly on discrete structures, where we focus
in particular on undirected connected graphs.
This random motion is subjected to successive recurrent resets (instantaneous relocations to nodes) governed by a counting process which can be of Markovian or non-Markovian nature. 
We analyze various resetting mechanisms and characterize these choices by means of a relocation matrix.
We explore the resulting dynamics on realizations of the Watts-Strogatz and Barabási-Albert random graphs and investigate how resets can affect the efficiency in the searching of a target. We derive an exact formula of the propagator of the associated walk where it turns out that if the waiting time between consecutive resets has a finite mean, 
the infinite time limit of the propagator matrix is a non-equilibrium steady state (NESS).

Interesting features of the dynamics are revealed by the hitting time statistics for the reaching of target nodes and asymptotic behaviors.
In particular, we analyze the mean first passage time for non-Markovian resetting, where we focus on the case of (fat-tailed) Sibuya distributed inter-resetting times.
}

\section{Introduction}
\label{Intro}

The theory of SR made its first appearance in the literature only a decade ago \cite{Evans-et-al2011} and has attracted considerable attention since then.
In that paper, a Brownian particle is relocated (or reset) to the starting point and the time between consecutive relocations are independent and exponentially distributed, which causes the finiteness of the mean time to reach a target.
Subsequent developments concerned different resetting mechanisms, different underlying stochastic processes, and other aspects (see the review \cite{Evans-etal2020}).
Several models in the literature are devoted to Markovian resets
\cite{Das2022,Boyer-et-al2023,Gupta-etal-Frontiers2022,Pal2016},
while others relax the Markov property \cite{Evans-etal2020,Bodrovka-et-al2019a,Bodrovka-et-al2019b, Sandev-et-al2022,Sandev-Mendez-et-al2023, Barkai-etal-2023}.
In both cases, processes other than the Brownian motion have also been analyzed \cite{Bodrovka-et-al2019a, Sandev-et-al2022, Jalakovsky-et-al2023}. An aspect of particular interest considered in the literature, and in the present paper as well, is that of random search (see \cite{PalSandev2023,Chechkin2018,Benichou_et_al2011,Eule-et-al2016,GamlmesCamosMendez2020,VasquezBoyer2021,MercadoVasquez2022,PalReuveni2017} just to name a few). In these models, a particle moves randomly in a certain environment looking for one or multiple targets to visit. The first passage time (we use synonymously the term ``first hitting time'') of the particle to one of the targets is a measure of the efficiency of searching strategies.
First passage features of long-range motions such as L\'evy flights and walks have been investigated for a long time \cite{MetzlerKlafter2000,Palyulin2019,RiascosMateos2012,MiRia2017} and the effects of resetting on these motions were extensively studied 
\cite{Kusmierz2014,ZbikDybiek2024}. 
Furthermore, Markovian resetting of random walks in complex graphs was investigated recently \cite{Riascos_boyer_stat_res2020} and the dependence of the resetting probability on the mean first passage times has been analyzed in order to define an optimal search strategy.
SR under certain conditions gives rise to the emergence of non-equilibrium steady states (NESS) \cite{Evans-et-al2011}. For instance, a free Brownian particle becomes localized around the resetting site and stationary under Markovian resetting (consult
\cite{Sandev-et-al2022,Sandev-Mendez-et-al2023,Barkai-etal-2023} for extensive discussions). 
While resetting to a deterministic location has been analyzed thoroughly, resetting to random locations is much less explored \cite{Boyer-et-al2023,Riascos-Boyer-etal2021}. 

An interesting class of problems emerges in presence of ``partial resetting'', where a particle is reset from its actual position, say $x$, to a new position $x'=ax$ where $a=0$ corresponds to complete reset and $a=1$ to no reset. Brownian particles under Markovian partial resetting are analyzed in \cite{Tal-Friedman-etal2022}. In particular, they showed that for $0\leq a < 1$ a NESS always emerges. The effects of partial resetting have been studied recently in different directions \cite{Dahelenburg-Metzler-et-al2021, van_der_hof2023, don_2024, Pierce2022, Biroli-et-al2024}. 

A further aspect worthy of consideration is the possibility that the reset gives rise to a relocation to a random position chosen from a set with some specified probability law. For instance, in  \cite{BoyerSolisSalas2014,MeyerRieger2021} the particle is preferentially reset to previously visited sites or more in general has a memory of previously visited locations.
A discrete time renewal approach was employed to analyze Markov random walks in networks with certain time dependent resetting mechanisms \cite{ChenYe2022}, focusing on non-Markov effects of aging of the walker and first passage features for a single target node.

The present work concerns discrete time dynamics on discrete structures such as lattices and realizations of some random graphs in presence of resetting governed by Markov and non-Markov counting processes.
In particular, we describe the behavior of a Markov random walker
navigating in discrete time in an undirected network and which is reset to a randomly chosen
node at the arrival time instants of the mentioned discrete-time renewal process.
We characterize the random choice of the node where the walker is relocated in a reset by a specific relocation matrix.
Taking into consideration the statistics of the discrete backward recurrence times, we are able to derive a compact formula for the propagator matrix
by solving a specific renewal equation.
If the waiting time between the resets has a finite mean, the infinite time limit of the propagator matrix is a NESS.

The present paper is organized as follows. After a brief summary of the properties of discrete-time renewal processes, we study Markovian resetting in Section \ref{Markovian_reseting}, that is the case in which the renewal process governing the resets is a Bernoulli process. We focus on first passage quantities such as the mean first passage time (MFPT) the walker takes to reach specific target nodes.
We also investigate numerically the dependence of the Bernoulli parameter $p$ (coinciding with the resetting rate) for various choices of the resetting nodes (r-nodes)\footnote{We refer to ``resetting nodes'' if the walker can be relocated to them.}. The motivation for this stems from the question on whether the efficiency of a random searcher is improved by changing the resetting scenario.
In this direction, we illustrate our results by means of numerical simulations in Watts-Strogatz and Barabási-Albert networks. A result of interest is that, for a fixed value of the resetting rate, the efficiency has a non monotonic behavior with respect to the proportion of the resetting nodes. Conversely, for a fixed number of r-nodes, in some cases an optimal resetting rate can be found to minimize the MFPT.
In Section \ref{FPH-quantities} we explore the first hitting statistics for walks 
under non-Markovian renewal resets.
To that end, we
endow the target nodes with a killing feature: the walker is killed (removed from the network) as soon as it reaches one of the target nodes. This simple assumption allows us to derive a modified propagator matrix (the ``survival propagator'') which is linked to killed sample paths and gives information on the first hitting statistics. With these results, we establish sufficient conditions for which the walk in presence of resetting is ergodic. 
Moreover, in Section \ref{5825}, we investigate a generic resetting mechanism for which the walker cannot explore the whole network implying a lack of ergodicity.
As a further application, we explore in Section \ref{Sibuya_ARW} the first hitting dynamics to target nodes for the non-Markovian case of Sibuya distributed time intervals between resets. 

\section{Markovian walks under SR}
Here we elaborate some technical details of discrete-time renewal processes and occupation time statistics relevant for motions in networks with SR. The associated discrete-time processes are much less common in the literature compared to their continuous time counterparts. For further information and applications of discrete-time renewal processes, continuous-space-time scaling limits, and connections with discrete semi-Markov chains consult \cite{PachonPolitoRicciuti2021,Barbu_Limnios-et-al2008,MichelitschPolitoRiascos2021,ADTRW2021,SRW2023,GSD-Squirrel-walk2023}, 
and see \cite{GodrecheLuck2001} for related occupation time statistics in continuous time.
In this paper we consider a discrete-time  Markovian random walker moving on a finite, undirected, connected, aperiodic, and ergodic graph \cite{Barabasi2016,Newman2010}. The walker is subjected to recurrent resets governed by a discrete-time renewal process.
To this aim, we first recall a Markovian random walk.

At each integer time $t\in \mathbb{N}_0=\{0,1,2,\ldots\}$ the walker moves from one to another connected node. We denote the nodes by $i=1,\ldots, N$ and characterize the topology of the finite, undirected network by the symmetric adjacency matrix $\mathbf{A}$, with $A_{ij} = A_{ji}=1$ if the nodes $i$ and $j$ are connected by an edge, and $A_{ij}=0$ otherwise. We avoid self loops by setting $A_{ii}=0$.  Further,  $K_i= \sum_{j=1}^NA_{ij}$ is the degree of a node $i$ which corresponds to the number of its neighbors. The steps are drawn from the transition matrix 
$\mathbf{W}$ where its elements are $W_{ij}=\frac{A_{ij}}{K_i}$ \cite{Barabasi2016,fractionalbook-2019,NohRieger2004}.
We point out that in undirected networks, $\mathbf{W}$ is not symmetric, unless the degrees $K_i$ coincide.
The propagator matrix, that is the transition matrix for $t$ steps, $\mathbf{P}^{(0)}(t) = [P^{(0)}_{ij}(t)]$ fulfills the master equation
\beq
\label{master_eq}
P^{(0)}_{ij}(t+1) =  \sum_{k=1}^N P^{(0)}_{ik}(t)W_{kj} , \hspace{1cm}  P^{(0)}_{ij}(0) =\delta_{ij}, \hspace{1cm} t \in \mathbb{N}_0,
\eeq
where $P_{ij}^{(0)}(t)$ denotes the probability that the walker occupies $j$ at time $t$, if it started from node $i$ at time $0$.
The resulting walk is a Markov chain with transition matrix 
\beq
\label{purely_markovian}
\mathbf{P}^{(0)}(t) = \mathbf{W}^t = |\phi_1\rangle \langle \bar \phi_1| + \sum_{m=2}^N  \lambda_m^t |\phi_m\rangle \langle {\bar \phi}_m| , \hspace{1cm} \mathbf{P}^{(0)}(0) = \mathbf{1},
\eeq
where we used Dirac's bra-ket notation to make evident the spectral properties of $\mathbf{W}$. Note that, since we consider undirected networks, $\mathbf{W}$ has $N-1$ real eigenvalues $|\lambda_m|< 1$  and one largest single  Perron–Frobenius eigenvalue $\lambda_1=1$ \cite{Newman2010,fractionalbook-2019,VanMiegem2011,Meyer2000}. 
This assumption implies that we assume the presence of at least one return path of odd length \cite{NohRieger2004}. This, in turn, excludes that the graph is bipartite \cite{lovasz}.
In (\ref{purely_markovian}) we have introduced the right and left eigenvectors $|\phi_r\rangle$, $\langle {\bar \phi}_s|$ of $\mathbf{W}$, respectively, where $\mathbf{1} =\sum_{m=1}^N |\phi_m\rangle \langle {\bar \phi}_m|$, and
$ \langle \phi_m|{\bar \phi}_n\rangle =\delta_{mn}$. We denote with $\langle i|\phi_m \rangle$ the $i$-th Cartesian component of the
column vector $|\phi_m \rangle$ and with
$\langle {\bar \phi}_m|j \rangle$ the $j$-th Cartesian component of the row vector $\langle {\bar \phi}_m|$.
By construction, the transition matrices $\mathbf{W}^t$ are row-stochastic inheriting this feature from the one-step transition matrix $\mathbf{W}$. The first term in (\ref{purely_markovian}) is the stationary distribution in which
$ \langle i|\phi_1\rangle \langle {\bar \phi}_1| j\rangle = {K_j}/{\sum_{r=1}^N K_r}$  is
independent of $i$ \cite{Newman2010,NohRieger2004}. The initial occupation distribution defined by the state (row-) vector $\langle p(0)|$ evolves into  $\langle p(t)| = \langle p(0)|\mathbf{P}^{(0)}(t)$
which is  a row vector containing the components $p_j(t)$ of the occupation probabilities of the nodes $j=1,\ldots,N$.

\medskip

Now, we assume that the above Markovian walk is subjected to resetting. When a reset occurs, the walker is relocated to any of the nodes of the network according to some probability distribution that we will specify later on. 
The resetting times $J_n \in \mathbb{N}$, $n \in \mathbb{N}$, correspond to the arrival times of the discrete-time renewal process (independent of the steps)
\beq
\label{renewal_times}
J_n =\sum_{r=1}^n \Delta t_r, \hspace{1cm} J_0=0,  \hspace{1cm} \Delta t_r \in \mathbb N=\{ 1, 2, \ldots \}, 
\eeq
where the $\Delta t_j$ are IID random variables describing time intervals between consecutive resets. 
We call (\ref{renewal_times}) the renewal resetting process (RRP).
We introduce the counting renewal process
\beq
\label{counting_var}
{\cal N}(t) = \max(n\geq 0: J_n \leq t)  , \hspace{1cm} {\cal N}(0)=0 , \hspace{1cm} t  \in \mathbb{N}_0,
\eeq
counting the number of resets up to and including time $t$. We assume that there is no reset at $t=0$. The time intervals $\Delta t_j \geq 1$ follow the discrete PDF
\beq
\label{discret-PFD}
\mathbb{P}[\Delta t = r] =\psi(r) , \hspace{1cm} r\in \mathbb{N},
\eeq
with $\psi(0)=0$ to ensure $\Delta t_j  \geq 1$. We will need its generating function (GF)
\beq
\label{Gf_spi}
{\bar \psi}(u) = \big\langle u^{\Delta t} \big\rangle = \sum_{r=1}^\infty \psi(r) u^r, \hspace{1cm} |u| \leq 1,
\eeq
where ${\bar \psi}(1)=1$ implies the normalization of $\psi(t)$.
It is also useful to consider the state probabilities of (\ref{counting_var})
\beq
\label{state_probs}
\mathbb{P}[\mathcal{N}(t) = n] = \Phi^{(n)}(t) = \big\langle \Theta(J_n,t,J_{n+1}) \big\rangle.
\eeq
In \eqref{state_probs} we make use of the indicator function
$\Theta(J_n,t,J_{n+1})$,
where, for $a,b,t \in \mathbb{N}_0$, $a<b$,
\begin{align}
\label{Theta-a-b}
\Theta(a,t,b) &=\Theta(t-a)-\Theta(t-b) 
= \left\{\begin{array}{l} 1, \hspace{1cm} {\rm for} \hspace{0.5cm} a \leq t \leq b-1,  \\[2mm]
0, \hspace{1cm} {\rm otherwise}, \end{array}\right.
\end{align}
involving the discrete Heaviside step function defined on 
$\mathbb{Z}$:
\beq
\label{discrete-Heaviside}
\Theta(r) =  \left\{\begin{array}{cl} 1 ,\hspace{1cm} r \geq 0,  \\[2mm]
	0 , \hspace{1cm} r<0. \end{array}\right. 
\eeq
Note that for $n=0$ we have $\Theta(0,t,\Delta t_1) = \Theta(t)-\Theta(t-\Delta t_1)= \Theta(\Delta t_1-1-t)$.
Allowing for a random choice of the relocation node, we introduce the \emph{relocation matrix} $\mathbf{R}$ for which $R_{ij}$ is the probability that the walker being in node $i$ is relocated at node $j$. We consider the case in which $\mathbf{R}$ has identical rows, i.e.\ $R_{ij} =  R_{j}$, with row normalization $\sum_{j=1}^NR_j=1$.    
As a special case, $R_j = \delta_{rj}$ for all $j=1,\dots,N$, if the relocation happens on a single node $r$ almost surely. 
We observe the following feature of the relocation matrix $\mathbf{R}=[R_j]$:
\beq
\label{absorbing_feature}
[\mathbf{W} \cdot \mathbf{R}]_{ij} = \sum_{k=1}^N W_{ik} R_j = R_j = R_{ij}.
\eeq
Therefore $\mathbf{W}^t\cdot \mathbf{R} = \mathbf{R}$ and also $\mathbf{R}^t= \mathbf{R}$.
On the other hand, 
\beq
\label{R_W}
[\mathbf{R}\cdot \mathbf{W}]_{ij} = \sum_{k=1}^N R_k W_{kj} = W'_{j} \hspace{1cm} ( \neq [\mathbf{W} \cdot \mathbf{R}]_{ij} )
\eeq
and for $R_{ij}=\delta_{rj}$, (\ref{R_W}) yields $W'_{ij} = W_{rj}$.

With these ingredients, we are ready to establish the transition matrix (the propagator) $\mathbf{P}(t)$ of the walk 
with resets,
\beq
\label{walk_with_resets}
 \mathbf{P}(t) =   \mathbf{W}^t  \big\langle \Theta(\Delta t_1-1-t) \big\rangle + \mathbf{R} \cdot 
 \sum_{n=1}^{\infty} \big\langle \Theta(J_n,t,J_{n+1}) \mathbf{W}^{t-J_n} \big\rangle , \hspace{1cm} t \in \mathbb{N}_0,
\eeq
where
the so-called \emph{backward recurrence time} $B_{n,t} = t-J_n \geq 0$, i.e., the delay between the time $t$ and the last renewal
time $J_n$, comes into play.
The first term in (\ref{walk_with_resets}) corresponds to the walk before the first reset and  $\big\langle \Theta(\Delta t_1-1-t) \big\rangle = \sum_{r=t+1}^{\infty} \psi(r) =\Phi^{(0)}(t)$ is the probability that no reset occurs up to and including time $t$ (with the initial condition $\Phi^{(0)}(0)=1$ due to the fact that ${\cal N}(0)=0$). Recall also that the $i$-th row of $\mathbf{P}(t)$ represents the time-evolution of the occupation probabilities of the nodes if the walk starts at node $i$.

We refer to \cite{Barkai-etal-2023} for a thorough investigation of continuous-time 
resetting renewal processes, containing a discussion on the statistics of the backward recurrence time. See also \cite{GodrecheLuck2001} for occupation time statistics, again for continuous-time renewal processes.

We recall now some aspects of discrete-time renewal processes, where we focus on the statistics of backward recurrence times (consult also \cite{SRW2023,GSD-Squirrel-walk2023} for details).

Consider the probability $f(t,b,n)$ that the random variable $B_{n,t}=t-J_n$ equals $b$ and ${\cal N}(t)=n$
(we use $\delta_{i,j}=\delta_{ij}$ for the Kronecker symbol), that is
\beq
\label{f_t_B_n}
f(t,b,n) =  \big\langle \Theta(J_n,t,J_{n+1}) \delta_{t-J_n,b} \big\rangle ,\hspace{1cm} t, b, n \in \mathbb{N}_0.
\eeq
Note that for $n=0$ we have $f(t,b,0) = \Phi^{(0)}(t) \delta_{tb}$ (as $J_0=0$).
Useful for the following analysis is the discrete PDF $f(t,b)$ of the backward recurrence time $B_t$, independently on the choice on n, which we define from 
(\ref{f_t_B_n}) by
$$
f(t,b) = 
\mathbb{P}[B_t=b] = \sum_{n=0}^{\infty} f(t,b,n) = \sum_{n=0}^{\infty} \left\langle \Theta(J_n,t,J_{n+1}) \delta_{t-J_n,b} \right\rangle .
$$
The normalization of $f(t,b)$ with respect to $b$ follows from 
$\sum_{b=0}^{\infty} f(t,b) = \sum_{n=0}^{\infty} \Phi^{(n)}(t) =1 $  (see (\ref{state_probs})).
It is useful to take the double GF of $f(t,b,n)$ which yields 
\beq
\label{GF_uv}
\begin{array}{clr}
\ds {\bar f}(u,v,n)&  =\ds \sum_{t=0}^{\infty}\sum_{b=0}^{\infty} f(t,b,n)u^t v^b  , \hspace{0.5cm}
 |u| < 1, \hspace{0.25cm} |v| \leq 1 &\\ \\
& \ds  =\big\langle \sum_{t=J_n}^{J_{n+1}-1} u^t v^{t-J_n} \big\rangle = \big\langle u^{J_n} \big\rangle
\frac{1- \langle (uv)^{\Delta t_{n+1}} \rangle }{1-uv} = [{\bar \psi}(u)]^n \frac{1- {\bar \psi}(uv)}{1-uv} &
\end{array}
\eeq
where we have used that the times $\Delta t_j$ between the resets are IID together with (\ref{Gf_spi}). 
Setting $v=1$ gives the GF of the state probabilities \eqref{state_probs}. The double GF of $f(t,b)$ then yields
\beq
\label{GF_FTV}
{\bar f}(u,v)  = \frac{1}{1-{\bar \psi}(u)}   \frac{1- {\bar \psi}(uv)}{1-uv}.
\eeq
Further, we have ${\bar f}(0,v) = 1$, reflecting the initial condition of the state probabilities $\Phi^{(n)}(0) =\delta_{n0}$. Then, for subsequent use, it is useful to account for the
resetting rate
\beq
\label{resetting_rate}
{\cal R}(t) = \sum_{n=1}^{\infty} \Psi_n(t),  
\eeq
where
\beq
\Psi_n(t)= \sum_{r=0}^t \psi(r)\Psi_{n-1}(t-r), \qquad
\Psi_{0}(t) =\delta_{t0} , \qquad t= 1,2,\ldots
\eeq
Since ${\bar \Psi}_n(u)=[{\bar \psi}(u)]^n$, the  GF of the resetting rate is ${\bar {\cal R}}(u) = \frac{{\bar \psi}(u)}{1-{\bar \psi}(u)}$.
We can rewrite (\ref{GF_FTV}) as ${\bar f}(u,v) = (1+{\bar {\cal R}}(u)) \frac{1- {\bar \psi}(uv)}{1-uv}$
which, by full inversion, gives (see Appendix \ref{derivations_18_19} for a detailed derivation)
\beq
\label{ftB}
f(t,b) = \Phi^{(0)}(b) \left(\delta_{tb}+{\cal R}(t-b) \right) , \hspace{1cm} b, t = 0, 1,2, \ldots
\eeq
where ${\cal R}(t-b)=0$ for $b \geq t$ as ${\cal R}(\tau)$ (as well as $\psi(\tau)$) is null for $\tau < 1$ and
$ \Phi^{0}(b) =1-\sum_{r=1}^b\psi(r)$ is the persistence probability.
Hence, $f(t,b) = 0$ for $b >t$.
Inverting (\ref{GF_FTV}) with respect to $u$ leads to the representation
\beq
\label{Gf_B}
{\bar f}(t,v)
= \Phi^{(0)}(t) v^t + \sum_{b=0}^t{\cal R}(t-b) \Phi^{(0)}(b) v^b.
\eeq
Since ${\bar f}(u,v) = \frac{1- {\bar \psi}(uv)}{1-uv} + {\bar \psi}(u){\bar f}(u,v)$ one can see that (\ref{Gf_B})
fulfills the renewal equation
\beq
\label{renewal_f}
{\bar f}(t,v) = \Phi^{(0)}(t) v^t + \sum_{k=1}^t \psi(k) {\bar f}(t-k,v)
\eeq
where we considered $\psi(0)=0$.
A similar equation will also arise in the context of walks with resetting, which we will cope with in the following sections.
Moreover, inverting (\ref{GF_FTV}) with respect to $v$ gives ${u^b \Phi^{0}(b)}/({1-{\bar \psi}(u)})$ and therefore $f(t,b)$ takes the stationary distribution 
\beq
\label{barkai_agrement}
f(\infty,b) =  \lim_{u\to 1-}  u^b \Phi^{0}(b)  \frac{(1-u)}{1-{\bar \psi}(u)}  = \frac{\Phi^{0}(b)}{\langle \Delta t \rangle}
\eeq
if the mean resetting time is finite.
This relation is the discrete-time counterpart to the one reported in the literature \cite{Barkai-etal-2023,GodrecheLuck2001,Dynkin1955}. Lastly, from
\beq
\label{fB_limit}
 {\bar f}(\infty,v) 
 = \frac{1}{\langle \Delta t \rangle}\frac{1-{\bar \psi}(v)}{1-v} ,\hspace{1cm} |v| \leq 1,
\eeq
one can see that $f(\infty,b)$ is a proper PDF, as ${\bar f}(\infty,v) \to 1$ for $v\to 1$.

A natural question arises: what happens if
$\langle \Delta t\rangle$ is infinity? This is the class of random variables for which $\psi(t)$ is fat-tailed and $f(\infty,b)\to 0+$ for every finite $b$ (see e.g.\ the Sibuya case (\ref{Dynkin}), in Appendix \ref{standard_Sib}, where ${\bar \psi}_{\alpha}(u)=1-(1-u)^{\alpha}$), thus $\Phi^{(0)}(b)$ is not a normalizable function.
Indeed, $f(\infty,b)$ then is a defective PDF with respect to $b$. 
Consult \cite{Dono_Mi_Po_Ria2024} for the related theory, and see also \cite{Gelfand-Shilov1968}.
The backward recurrence time $B$ is concentrated at infinity. 
To see this, consider (\ref{GF_FTV}) where ${\bar f}(u,1)=1/(1-u)$ shows that $f(t,b)$ is a properly normalized (non-defective) PDF of $b$ for any finite observation time, where 
\beq
\label{inf_time}
{\bar f}(\infty,1) := \lim_{t\to \infty} \sum_{b=0}^t f(t,b) = 1 
\eeq
corresponds to the physical situation that we extend our observation time larger and larger. 
In the Sibuya case the infinite time limit (\ref{inf_time}) is retrieved from
\beq
\label{definilimit}
{\bar f}_{\alpha}(\infty,v) = 
\lim_{u\to 1-} (1-u) f_{\alpha}(u,v) = (1-u)^{1-\alpha} (1-uv)^{\alpha-1}\bigg|_{u=1} = \left\{\begin{array}{cl} 0 , & |v|< 1 \\ \\
1 , & v=1 
\end{array}\right.  \hspace{0.5cm} \alpha \in (0,1) .
\eeq
Evoking Tauberian arguments,
one can see that for $|v|<1$ one has ${\bar f}_{\alpha}(u,v) \sim (1-v)^{\alpha-1} (1-u)^{-\alpha} $ as $u\to 1-$. The zero value is approached by a slow power-law 
${\bar f}_{\alpha}(t,v) \sim (1-v)^{\alpha-1} t^{\alpha-1}/\Gamma(\alpha)$, as $t\to \infty$.
We will see a little later that (\ref{definilimit}) is indeed crucial
for the non-existence of a NESS for this RRP class.
We refer also to the extensive discussions in \cite{Barkai-etal-2023,WangSchulzBarkai2028}.

\medskip

Now, we apply the above results to obtain the following GF of the propagator (\ref{walk_with_resets}) written in matrix form
\beq
\label{Gf_transition matrix} 
{\bar {\bf P}}(u) = \left({\mathbf 1}  +\frac{{\bar \psi}(u)}{1-{\bar \psi}(u)}\mathbf{R}\right)\cdot \frac{{\mathbf 1} - {\bar \psi}(u\mathbf{W})}{{\mathbf 1} - u\mathbf{W}}   , \hspace{1cm} |u|< 1
\eeq
where $ \mathbf{1}$ stands for the unity matrix. Observe that $\frac{{\mathbf 1} - {\bar \psi}(u\mathbf{W})}{{\mathbf 1} - u\mathbf{W}}$ is the GF of $\mathbf{W}^t\Phi^{(0)}(t)$ (first term in (\ref{walk_with_resets})). 
Let us check the non-Markovianity of the walk. By inverting (\ref{Gf_transition matrix}) with respect to $u$ yields
\beq
\label{propagator_time}
\mathbf{P}(t) = \Phi^{0}(t)  \left(\mathbf{1}- \mathbf{R}\right) \cdot \mathbf{W}^t + \mathbf{R} \cdot {\bar f}(t,\mathbf{W}) .
\eeq
Use (\ref{renewal_f}) together with $\mathbf{R}\cdot \mathbf{P}(t) = \mathbf{R} \cdot {\bar f}(t,\mathbf{W})$ to arrive at the renewal equation for $\mathbf{P}(t)$:
\beq
\label{renewal_structure}
\mathbf{P}(t) =  \Phi^{(0)}(t) \mathbf{W}^t + \mathbf{R}\cdot\sum_{k=1}^t \psi(k) \mathbf{P}(t-k)
\eeq
This relation is a ``network discrete-time version'' of the renewal equation of random motions under resetting reported in the literature
\cite{Evans-et-al2011,Evans-etal2020,Sandev-et-al2022,Sandev-Mendez-et-al2023, GamlmesCamosMendez2020} and generalizes the one given in \cite{ChenYe2022} to a random choice of r-nodes.
Particularly useful is the canonical representation of
(\ref{Gf_transition matrix})\footnote{Where we use $\mathbf{R} \cdot |\phi_1\rangle \langle {\bar \phi}_1| =  |\phi_1\rangle \langle {\bar \phi}_1|$.}
\begin{multline}
\label{GF_state_resets}
{\bar P}_{ij}(u)  = {\bar P}_{ij}(u) =
\frac{1}{(1-u)} \frac{K_j}{\sum_{r=1}^NK_r} \\+  \sum_{m=2}^N \frac{1-{\bar \psi}(u\lambda_m)}{1-\lambda_m u}\left( 
\langle i|\phi_m\rangle \langle {\bar \phi}_m|j\rangle + \frac{{\bar \psi}(u)}{1-{\bar\psi}(u)} \sum_{r=1}^N R_r\langle r |\phi_m\rangle \langle {\bar \phi}_m|j\rangle\right) .
\end{multline}
Inverting this GF yields
\begin{multline}
\label{switch-back}
P_{ij}(t) = \frac{K_j}{\sum_{r=1}^NK_r} \\+  \sum_{m=2}^N \left( \Phi^{(0)}(t) \lambda_m^t \,\langle i|\phi_m\rangle \langle {\bar \phi}_m|j\rangle +
\left({\bar f}(t,\lambda_m) - \Phi^{(0)}(t) \, \lambda_m^t  \right) \sum_{r=1}^N R_r \langle r |\phi_m\rangle \langle {\bar \phi}_m|j\rangle \right).
\end{multline}
Accounting for (\ref{fB_limit}) takes us to the infinite time limit (NESS):
\beq
\begin{aligned}
 \ds & \mathbf{P}(\infty)   = \ds \mathbf{R} \cdot {\bar f}(\infty,\mathbf{W}) & \\
\ds & P_{ij}(\infty) = \ds P_{j}(R_1, \ldots, R_N; \infty) =
  \frac{K_j}{\sum_{r=1}^NK_r} + \frac{1}{\langle \Delta t \rangle}  \sum_{m=2}^N \frac{1-{\bar \psi}(\lambda_m)}{1-\lambda_m} \sum_{r=1}^N R_r \langle r|\phi_m\rangle \langle {\bar \phi}_m| j\rangle,
  \end{aligned}
\label{NESS}
\eeq
which does not depend on the initial node $i$, that is this matrix has identical 
rows. Further, it contains the averaging of the relocation node $r$ with respect to the probabilities $R_r$ and preserves row normalization. The NESS propagator fulfills $ \mathbf{P}(\infty) = \mathbf{R} \cdot \mathbf{P}(\infty)$.
Both (\ref{NESS}) and $\mathbf{R}$ have rank one with a single eigenvalue equal to one 
(as $\mathbf{P}(\infty)\cdot |\phi_1\rangle = \mathbf{R}\cdot |\phi_1\rangle = |\phi_1\rangle$), and $N-1$ zero eigenvalues.
The existence of a NESS (i.e.\ with non-vanishing second term in (\ref{NESS})) requires a finite mean $\langle \Delta t \rangle$ of the time intervals between consecutive resets. 
This was also found in earlier works for continuous space-time walks with resetting
\cite{Sandev-et-al2022,Barkai-etal-2023} and remains true in infinite networks, where $ K_j/(\sum_{r=1}^NK_r) \to 0$.
For the cases of distributions of the time intervals between consecutive resets with diverging mean, the second term in (\ref{NESS}) is suppressed (see (\ref{definilimit})), thus the stationary (equilibrium) distribution of the Markovian walk for infinite times is taken.

Worthy of mention is that if we choose as relocation probabilities $R_j=K_j/(\sum_{r=1}^NK_r)$ (the stationary distribution of the walk without resetting) then
the second term in (\ref{NESS}) is wiped out and no NESS can exist due to $\sum_{r=1}^NK_r \langle r| \phi_m\rangle =\langle {\bar \phi}_1|\phi_m\rangle  =0$, $m=2,\ldots ,N$. Thus, $P_{j}(\infty)$ again coincides with the (equilibrium) stationary distribution of the Markovian walk without resetting.
Keep in mind that the results derived so far hold for any (Markovian and non-Markovian) RRP. We consider next a few scenarios of Markovian resetting.
\subsection{Geometric resetting time intervals}
\label{Markovian_reseting}
The Markovian resetting to a single \cite{Riascos_boyer_stat_res2020} and to two resetting nodes \cite{Riascos-Boyer-etal2021,Julian_Salgado_etal2024} was explored recently. 
Here we consider Markovian resetting to randomly chosen nodes defined by the above introduced relocation matrix $\mathbf{R} = [R_j]$.
For a Markovian resetting, ${\cal N}(t)$ is a Bernoulli process (the discrete-time counterpart of a Poisson process). Then, the resetting PDF (\ref{discret-PFD})
is a geometric distribution $\psi(t) = pq^{t-1}$, $t\geq 1$, where $p \in (0,1)$ denotes the probability of a Bernoulli reset 
(and equals the constant resetting rate 
${\cal R}= p$, $t\geq 1$, defined by (\ref{resetting_rate}), see e.g.\, \cite{SRW2023}). We denote by $q=1-p$ the complementary probability.
The geometric distribution has GF ${\bar \psi}(u)= \frac{pu}{1-qu}$. 
That fact that geometrically distributed resetting times entails the Markov property can be seen by considering
the GF of the discrete-time memory kernel 
$ {\bar K}(u) = (1-u)\frac{{\bar \psi}(u)}{1-{\bar \psi}(u)} = p u $
and hence $K(t) = p \delta_{t1}$, which is null for $t>1$. 
The GF (\ref{Gf_B}) has then the form
\beq
\label{markov-distributions}
{\bar f}(t,\lambda) = \frac{p}{1-q\lambda} + \frac{q(1-\lambda)}{1-q\lambda}(q\lambda)^t , \hspace{1cm} \Phi^{(0)}(t) = q^t , \hspace{1cm} |\lambda| \leq 1
\eeq
with the stationary value ${\bar f}(\infty,\lambda) = \frac{p}{1-q\lambda}$ which we can invert with respect to $\lambda$ to recover 
$f(\infty,b) = pq^b$ supported on $\mathbb{N}_0$, in accordance with 
(\ref{barkai_agrement}).
Then we have for (\ref{Gf_transition matrix}),
\beq
\label{Bernoulli_GF_prop}
{\bar {\bf P}}(u) = \left(\mathbf{1} + \frac{pu}{1-u} \mathbf{R}\right) \cdot \frac{1}{\mathbf{1}-qu\mathbf{W}}
\eeq
which takes us to (see also (\ref{switch-back}) and (\ref{markov-distributions})) 
\beq
\label{switch-back_ber}
P_{ij}(t) = \frac{K_j}{\sum_{r=1}^NK_r} +  
\sum_{m=2}^N \left\{(q\lambda_m)^t \,\langle i|\phi_m\rangle \langle {\bar \phi}_m|j\rangle + \frac{p(1-(\lambda_m q)^t)}{1-q\lambda_m}
\sum_{r=1}^N R_r \langle r |\phi_m\rangle \langle {\bar \phi}_m|j\rangle \right\}.
\eeq 
Letting $t\to \infty$ we obtain for the NESS,
\beq
\label{NESS_Bernoulli}
\begin{array}{llr} 
\ds \mathbf{P}(\infty)   =    p \mathbf{R}  \cdot \frac{1}{\mathbf{1}-q\mathbf{W}} & & \\[2mm]
\ds P_{ij}(\infty) = P_{j}(p; R_1, \ldots, R_N; \infty)  = \ds P_{j}(\infty) = \frac{K_j}{\sum_{r=1}^NK_r} + p \sum_{m=2}^N 
\sum_{r=1}^N R_r \frac{\langle r| \phi_m\rangle \langle {\bar \phi}_m|j\rangle}{1-\lambda_m q}, & &
\end{array}
\eeq
which is independent of the starting node $i$. Further, $\langle \Delta t \rangle =1/p$ and $\frac{1}{1-\lambda_m q} = \frac{1-{\bar \psi}(\lambda_m)}{1-\lambda_m}$, see (\ref{NESS}). The first term corresponds to the stationary distribution of $\mathbf{W}^t$ (as $t\to \infty$), whereas the second one is related to the non-equilibrium resetting behavior. Note that, 
for the numerical computation of the NESS it is sufficient to invert $\mathbf{1}-q\mathbf{W}$ in the matrix representation (\ref{NESS_Bernoulli}) (having eigenvalues $q|\lambda_m|< 1$).  The inversion can be performed for $p \in (0,1]$ without the need of determining of the spectral quantities of $\mathbf{W}$.
Expression (\ref{NESS_Bernoulli}) generalizes the result of the NESS obtained in 
\cite{Riascos_boyer_stat_res2020,Riascos-Boyer-etal2021,ChenYe2022,HuangChen2021} to an arbitrary set of randomly chosen nodes.

Instructive is the deterministic limit $p=1$ for one relocation node, say $r_0$ ($R_j=\delta_{j,r_0}$) where the walker is constantly relocated at $r_0$, thus remaining trapped there forever. The transition matrix becomes stationary for $t\geq 1$ with $P_{ij}(t) = P_j(1;R_1,\ldots,R_N;\infty)= \delta_{j,r_0}$.
One observes that deterministic (periodic) resets with short periodicity may prevent the walker to explore the complete network and hence ergodicity of the walk will break down. We come back to this issue later on.

Let us explore whether the considered Markovian walk with geometric resetting is a Markov chain. To this end, we write its matrix form
\beq
\label{matrix_transition matrix_reset}
\mathbf{P}(t) = \mathbf{W}^t q^t +p \mathbf{R}\cdot \frac{\mathbf{1}-q^t\mathbf{W}^t}{\mathbf{1} - q\mathbf{W}} ,\hspace{1cm} t=0,1,2,\ldots 
\eeq
Then, we have $\mathbf{P}(1) = q\mathbf{W}+p\mathbf{R}$ which is indeed the one-step transition matrix of the model in \cite{Riascos_boyer_stat_res2020}). Note that
\beq
\label{markov_check}
\begin{aligned}
\mathbf{P}(t)\cdot \mathbf{P}(1) &= 
\left(\mathbf{W}^t q^t +p \mathbf{R}\cdot  \frac{\mathbf{1}-q^t\mathbf{W}^t}{\mathbf{1} - q\mathbf{W}}\right)  
\cdot \left( q\mathbf{W}+p\mathbf{R}\right) \\[2mm]
&= \mathbf{W}^{t+1} q^{t+1} +p \mathbf{R}\cdot \frac{\mathbf{1}-q^{t+1}\mathbf{W}^{t+1}}{\mathbf{1} - q\mathbf{W}} =\mathbf{P}(t+1), 
\end{aligned}
\eeq
where we used ${\bar g}(u\mathbf{W})\cdot\mathbf{R} = \mathbf{R}\,{\bar g}(u)$.
Expression (\ref{markov_check}) shows that (\ref{matrix_transition matrix_reset}) is indeed a Markov chain and can be rewritten as
\beq
\label{equival}
\mathbf{P}(t)=\left( q\mathbf{W}+p\mathbf{R}\right)^t .
\eeq
Plainly, it remains Markovian in the deterministic limit $p=1$ where $P(t)= \mathbf{R}^t =\mathbf{R}$ for $t\geq 1$ and $\mathbf{P}(0)=\mathbf{1}$.

\subsubsection{First passage quantities for geometric resetting}
\label{first_passage}
We derive the matrix of the first passage probabilities to the nodes. 
Although the MFPT in networks and related aspects were analyzed thoroughly in the mentioned recent works \cite{Riascos_boyer_stat_res2020,Riascos-Boyer-etal2021}, the present section aims at complementing their results by elaborating on some aspects which were not considered so far. 

Let $\mathbf{F}(t) = [F_{ij}(t)]$ denote the probabilities that a walker starting from $i$ at $t=0$ reaches node $j$ at time $t$ for the first time. By conditioning recurring to the Markov property we can write \cite{NohRieger2004} 
\beq
\label{first-passage_proba}
P_{ij}(t) = \delta_{t0}\delta_{ij}+ \sum_{r=0}^t P_{jj}(t-r) F_{ij}(r)
\eeq
where $P_{ij}(t)$ is given by the equivalent expressions (\ref{matrix_transition matrix_reset}) and (\ref{equival}). 
In (\ref{first-passage_proba}), we have $F_{ij}(0)=0$ and $ P_{jj}(t-r)$ is the probability of returning at node $j$ in $t-r$ steps. The GF of (\ref{first-passage_proba}) with respect to $t$ is
${\bar P}_{ij}(u)= \delta_{ij} + {\bar P}_{jj}(u) {\bar F}_{ij}(u)$, thus,
letting $u=e^{-s}$, and considering that $s \to 0$,
\beq
\label{well_known}
{\bar F}_{ij}(u) = \frac{{\bar P}_{ij}(u)-  \delta_{ij}}{{\bar P}_{jj}(u)} = 
\frac{P_{j}(\infty) + s ({\bar r}_{ij}(1)-\delta_{ij}) }{P_{j}(\infty) + s \, {\bar r}_{jj}(1) } + o(s),
\eeq
with ${\bar P}_{ij}(u) = P_{j}(\infty)/(1-u) + {\bar r}_{ij}(u) = P_{j}(\infty)/s + {\bar r}_{ij}(1) +O(s)$, $s\to 0$ and 
where $P_j(\infty)=P_{j}(p;R_1, \ldots, R_N; \infty)$ is the NESS (\ref{NESS_Bernoulli}). In this relation we introduced the GF ${\bar r}_{ij}(u)$ of the decaying part in $r_{ij}(t)= P_{ij}(t)-P_j(\infty)$ which can be extracted from 
(\ref{switch-back_ber}) and gives (see Appendix \ref{appendix_num_eval}, Eq.\ (\ref{Pinftyuone}), for the matrix representation)
\beq
\begin{aligned}
{\bar r}_{ij}(u)\bigg|_{u=1} &= \sum_{t=0}^{\infty} (P_{ij}(t) -P_j(\infty))\\
&= \sum_{m=2}^N \left\{\frac{1}{1-\lambda_m q}
\,\langle i|\phi_m\rangle \langle {\bar \phi}_m|j\rangle - \frac{p}{(1-\lambda_m q)^2}
\sum_{r=1}^N R_r \langle r |\phi_m\rangle \langle {\bar \phi}_m|j\rangle \right\} .
\end{aligned}
\label{relaxing_parts}
\eeq
The mean first passage time (MFPT) of the walker to reach node $j$ (with starting node $i$) is
\beq
\begin{aligned}
\ds \big\langle T_{ij}& (p; R_1,\ldots, R_N)\big\rangle = \ds -\frac{d}{ds} {\bar F}_{ij}(e^{-s})\bigg|_{s=0} = \frac{1}{P_{j}(\infty)} \left(
{\bar r}_{jj}(1) -{\bar r}_{ij}(1) +\delta_{ij}\right) \\[2mm]
   & = \ds \frac{1}{P_j(p; R_1, \ldots, R_N;\infty)}\left(\delta_{ij} + \sum_{m=2}^{N} \frac{1}{1-q\lambda_m}[ \,\langle j|\phi_m\rangle \langle {\bar \phi}_m|j\rangle - \,\langle i|\phi_m\rangle \langle {\bar \phi}_m|j\rangle ] \right) . 
\end{aligned}
\label{MFPT}
\eeq
The term in the second line of (\ref{relaxing_parts}), which is independent on the starting node $i$, cancels out 
in ${\bar r}_{jj}-{\bar r}_{ij}$ of (\ref{MFPT}). The MFPT depends on the relocation matrix via the NESS given in (\ref{NESS_Bernoulli}) and is an asymmetric matrix, a feature
which is well-known for Markovian walks without resetting \cite{NohRieger2004}. For $j=i$ we have 
$\langle T_{ii} \rangle =1/P_{i}(p; R_1,\ldots, R_N;\infty)$ as the mean recurrence time to the starting node $i$, in accordance with Kac's Lemma \cite{Kac1947,Masuda-et-al2017}. For the purpose of numerical evaluations, it is useful to notice that (\ref{MFPT}) can be conveniently expressed by the elements of the matrix 
\beq
\label{S-matrix}
\mathbf{S}(p) = [1-q\mathbf{W}]^{-1} -\frac{1}{p}|\phi_1\rangle\langle{\bar \phi}_1|  ,\hspace{1cm} p \in (0,1]
\eeq
where $S_{ij} ={\bar r}_{ij}(u)\bigg|_{u=1}$. 
Consult Appendix \ref{appendix_num_eval} for some brief derivations.
We can hence express the MFPT 
(\ref{MFPT}) by using (\ref{S-matrix}) as
\beq
\label{represntation_Tij}
\big\langle T_{i j}(p; R_1,\ldots, R_N)\big\rangle = \frac{1}{P_{j}(p; R_1, \ldots, R_N;\infty)}\left(\delta_{ij} + S_{jj}(p) -S_{ij}(p)\right)  ,\hspace{1cm} p \in (0,1] .
 \eeq
Expressions (\ref{MFPT}) and (\ref{represntation_Tij}) generalize the MFPT obtained in the references \cite{Riascos_boyer_stat_res2020,Riascos-Boyer-etal2021,ChenYe2022}
and are, to our knowledge, not reported in the literature so far. 
In order to have a global measure of the time needed by a searcher to find a target in the network, we consider the Kemeny constant $\mathcal{K}$ under resetting (global MFPT) as
\beq
\label{Kemeny_def}
{\cal K}(p) +1 = \big\langle T \big\rangle = \sum_{j=1}^{\infty} \big\langle T_{i j}(p; R_1,\ldots, R_N)\big\rangle P_{j}(p; R_1, \ldots, R_N;\infty)
\eeq
where we consider the general definition given by Kemeny and Snell \cite{KemenySnell1960} (shifted by $1$ to account for visits of nodes different from the starting node $i$ only). 
The Kemeny constant has the interpretation of the expected time a searcher needs to reach a target node $j$ which is randomly chosen with (NESS) probability $P_j(p;R_1,\ldots,R_N,;\infty)$. The Kemeny constant thus contains global information about the efficiency of a searcher in the network. 
This straightforwardly leads to
\beq
\label{Kemeny_result}
{\cal K}(p) = \mathrm{tr}\{ \mathbf{S}(p) \} = \sum_{m=2}^N\frac{1}{1-\lambda_m q}
\eeq
where we used that $\sum_{j=1}^N S_{ij} = \langle i| \mathbf{S}|\phi_1\rangle = 0$ as $\langle i|\phi_1\rangle$ is constant. The 
Kemeny constant is independent of the starting node $i$, which generally holds true for Markovian walks, as shown in the 1960 seminal paper of Kemeny and Snell. For $p=0$, formula (\ref{Kemeny_result}) recovers the Kemeny constant of the Markovian walk without resetting \cite{NohRieger2004}. 
Interestingly enough, the Kemeny constant is also independent of the choice of the resetting matrix $[R_j]$ and depends only on the Bernoulli probability $p$ and the $N-1$ eigenvalues 
$|\lambda_m|<1$ of $\mathbf{W}$.

A further meaningful quantity is the mean relaxation time of a node $i$ \cite{NohRieger2004} (see
(\ref{relaxing_parts}) and Appendix \ref{appendix_num_eval}):
\beq
\label{mean_relax}
{\cal T}_i(p;R_1,\ldots,R_N) = 
\sum_{t=0}^{\infty} (P_{ii}(t)- P_{i}(\infty)) =  \left[ (\mathbf{1}-q\mathbf{W}-p\mathbf{R}) \, \cdot\, (\mathbf{1}-q\mathbf{W})^{-2} 
 \right]_{ii} , \hspace{0.5cm} p \in (0,1] .
\eeq
From this expression we define the global mean relaxation time of the nodes,
\beq
\label{Kemeny_res}
\begin{array}{clr}
\ds {\cal T}(p; R_1,\ldots, R_N) & = \ds \frac{1}{N}\sum_{i=1}^N{\cal T}_i(p;R_1,\ldots,R_N) = \frac{1}{N} \mathrm{tr}\{\left[ (\mathbf{1}-q\mathbf{W}-p\mathbf{R}) \, \cdot\, (\mathbf{1}-q\mathbf{W})^{-2}) \right] \}  & \\ \\
 & = \ds  \frac{1}{N} \sum_{m=2}^N \left\{\frac{1}{1-\lambda_m q}  - \frac{p}{(1-\lambda_m q)^2}
 \sum_{r=1}^N  R_r \langle r |\phi_m\rangle \sum_{i=1}^N \langle {\bar \phi}_m|i\rangle \right\}     & \\ \\
 & = \ds  \frac{1}{N} \sum_{m=2}^N \frac{1}{1-\lambda_m q}  = \frac{1}{N} {\cal K}(p) 
 \end{array}
\eeq
as for $m=2,\ldots,N$ we have $\sum_{i=1}^N \langle {\bar \phi}_m|i\rangle = \langle {\bar \phi}_m|\phi_1\rangle =0 $ ($\langle i|\phi_1\rangle$ is independent of $i$). Note that the global time  ${\cal T}(p; R_1,\ldots, R_N)=  {\cal T}(p)$ is independent of the resetting matrix.
Numerical evaluations of the quantities (\ref{represntation_Tij}) -- (\ref{Kemeny_res}) do not require the determination of the eigenvalues and eigenvectors of $\mathbf{W}$. 
For these evaluations it is sufficient to invert the matrix $\mathbf{1}-q\mathbf{W}$ numerically. The second term in $\mathbf{S}$ is proportional to
the stationary distribution $K_j/(\sum_{s=1}^N K_s)$. After the invertion, a numerical evaluation is performed by taking the traces of $\mathbf{S}$ and of the matrix 
in (\ref{Kemeny_res}).
The case without resetting where $\mathbf{1}-\mathbf{W}$ is not invertible (as $\lambda_1=1$) can be approached as the limit $p \to 0+$, where the well-known classical expression for Markovian walks without resetting is eventually recovered \cite{MiRia2017,NohRieger2004}.

Now, we introduce the following measure for the efficiency of a search strategy,
\beq
\label{searcher_efficieny}
E(p)= \frac{N}{{\cal K}(p)}  
\eeq
where $E(p)\big|_{p=1-} = N/(N-1) \approx 1$ ($N\gg 1$) which is very close to the maximum value $E_{CC}(0)= N^2/(N-1)^2$ in completely connected networks, which we prove hereafter.
Let us analyze more closely the Kemeny constant:
\beq
\label{kem_expan}
{\cal K}(p) = \sum_{m=2}^N \frac{1}{1 +(p-1)\lambda_m} 
\eeq
Observe that its second derivative
\beq
\label{kemeny_ineq}
 {\cal K}^{''}(p) = 2\sum_{m=2}^N \frac{\lambda_m^2}{(1 +(p-1)\lambda_m)^3}  > 0
\eeq
is a strictly positive function of $p$.
Therefore, 
$$  {\cal K}^{'}(0)   <  {\cal K}^{'}(p) = - \sum_{m=2}^N \frac{\lambda_m}{(1 +(p-1)\lambda_m)^2}  < {\cal K}^{'}(1) =  -\sum_{m=2}^N \lambda_m = 1 .$$
Notice that, since $W_{ii} = 0$ per construction, we have $\sum_{m=2}^N \lambda_m = \mathrm{tr}\{\mathbf{W}\} -1 = -1$ ($\lambda_1=1$). Hence, ${\cal K}(p)$ is a convex function of $p$. So if ${\cal K}^{'}(0) < 0$ then there exist an optimal $p=p_*$ such that ${\cal K}^{'}(p_{*})= 0$ where ${\cal K}(p_{*})$ is minimal.
As examples, in Fig.\ \ref{Kemeny_BA_WS}, we depict the Kemeny constant as a function of the resetting rate $p$ for the Barab\'asi-Albert and Watts-Strogatz graphs considered in the following.
\paragraph{Completely connected network}
As an instructive case to understand the role of connectivity, consider the Kemeny constant in a completely connected (CC) network, that is each node is connected to any other node except to itself. All nodes have degree $K_i^{CC}=N-1$ and the adjacency matrix has the entries $A_{ij}^{CC}= 1-\delta_{ij}$. CC networks have transition matrix \cite{MiRia2017} 
\beq
\label{transmat_cc}
\mathbf{W}^{CC} = [(1-\delta_{ij})/(N-1)] = \frac{N}{N-1} |\phi_1\rangle\langle {\bar \phi_1}| -\frac{1}{N-1}\mathbf{1} = |\phi_1\rangle\langle {\bar \phi_1}| -\frac{1}{N-1}\sum_{m=2}^N |\phi_m\rangle\langle {\bar \phi_m}|
\eeq
where the stationary state is $ \langle i|\phi_1\rangle \langle {\bar \phi_1}| j\rangle =1/N$, $i,j=1,\ldots,N$. Thus, we have $\lambda_m^{CC} =-1/(N-1)$, $m=2,\ldots,N$. The Kemeny constant can hence be explicitly written as
\beq
\label{kemen_CC}
{\cal K}_{CC}(p) = \frac{(N-1)^2}{N-p}
\eeq
which has a very weak dependence on $p$, and it is monotonically increasing with $p$, from ${\cal K}_{CC}(0)=  \frac{(N-1)^2}{N}$ to 
${\cal K}_{CC}(1)=N-1$ with ${\cal K}_{CC}(p) \approx N$ ($N\gg 1$).
We infer that for CC networks, resetting is not advantageous as
$E_{CC}(p) = N(N-p)/(N-1)^2 \approx 1$. 
Hence, in CC networks, there is no optimal resetting rate $p_*$.
\\ \\
Consider now the deterministic limit $p=1$ 
where the walker is relocated at each integer time instant $t\geq 1$ to a single relocation node, say $r$, with
$R_{j}=\delta_{jr}$. Then the walker remains trapped for $t\geq 1$ on the relocation node $r$ which is reflected by the NESS $P_j(\delta_{rj},\infty) = R_j =\delta_{rj}$. Thus, we get
\beq
\label{important_case}
 \langle T_{ij}(1;\delta_{r1},\ldots,\delta_{rN})\rangle  =\frac{1}{P_j(1;\delta_{r1},\ldots,\delta_{rN};\infty)} =\frac{1}{\delta_{rj}} =\left\{ \begin{array}{cl} \ds \infty , & \ds j \neq r, \\[2mm]
\ds    1 ,   &\ds  j=r. 
\end{array}\right. 
\eeq
The MFPT is exactly one only if the target node coincides with the resetting node. Otherwise, the target node can never be reached and the MFPT is infinity. 
In general, let ${\cal L}_R$ be the set of resetting nodes (r-nodes) to which the walker is allowed to be relocated with non-zero probabilities $R_k > 0$ if $k \in {\cal L}_R$ 
and $R_k=0$ for $k \notin {\cal L}_R$. Then, a target $j$ can only be found if $j\in {\cal L}_R$ and the MFPT is
\beq
\label{important_case_set_tarket}
 \langle T_{ij}(1;R_1,\ldots,R_N)\rangle  = \frac{1}{R_j} , \hspace{1cm} j \in {\cal L}_R
\eeq
as $P_j(1;R_1,\ldots,R_N;\infty)= R_j$  and $\langle T_{ij}(1;R_1,\ldots,R_N)\rangle = \infty$ if $j \notin {\cal L}_R$.
We intuitively infer that, in the deterministic limit,
the ergodicity of the walk breaks down if ${\cal L}_R$ 
does not comprise the entire network. We will confirm this assertion later on.
In the following two sections we explore the efficiency of a random searcher under Markovian resetting in realizations of random graphs of the Barabási-Albert (BA) and Watts-Strogatz (WS) type.
\paragraph{Barabási-Albert graph} 
We consider a random searcher on realizations of Barabási-Albert random 
graphs (BA graph in the following).
The BA graph is generated by a preferential attachment mechanism for the newly added nodes. We used in all examples the PYTHON NetworkX library.
To generate a BA graph, one starts with $m_0$ nodes. In each step a new node is added and connected with $m \leq m_0$ 
existing nodes which are selected with probability proportional to their degree. Thus, a newly added node is preferentially 
attached to a node with higher degree.
The quantity $m$ is referred to as the attachment parameter. In this growth process, a scale-free network with
an asymptotic power-law degree distribution, and the small world property emerges
\cite{Barabasi2016,Barabasi1999}.
\begin{figure}[t!]
\centerline{
\includegraphics[width=0.95\textwidth]{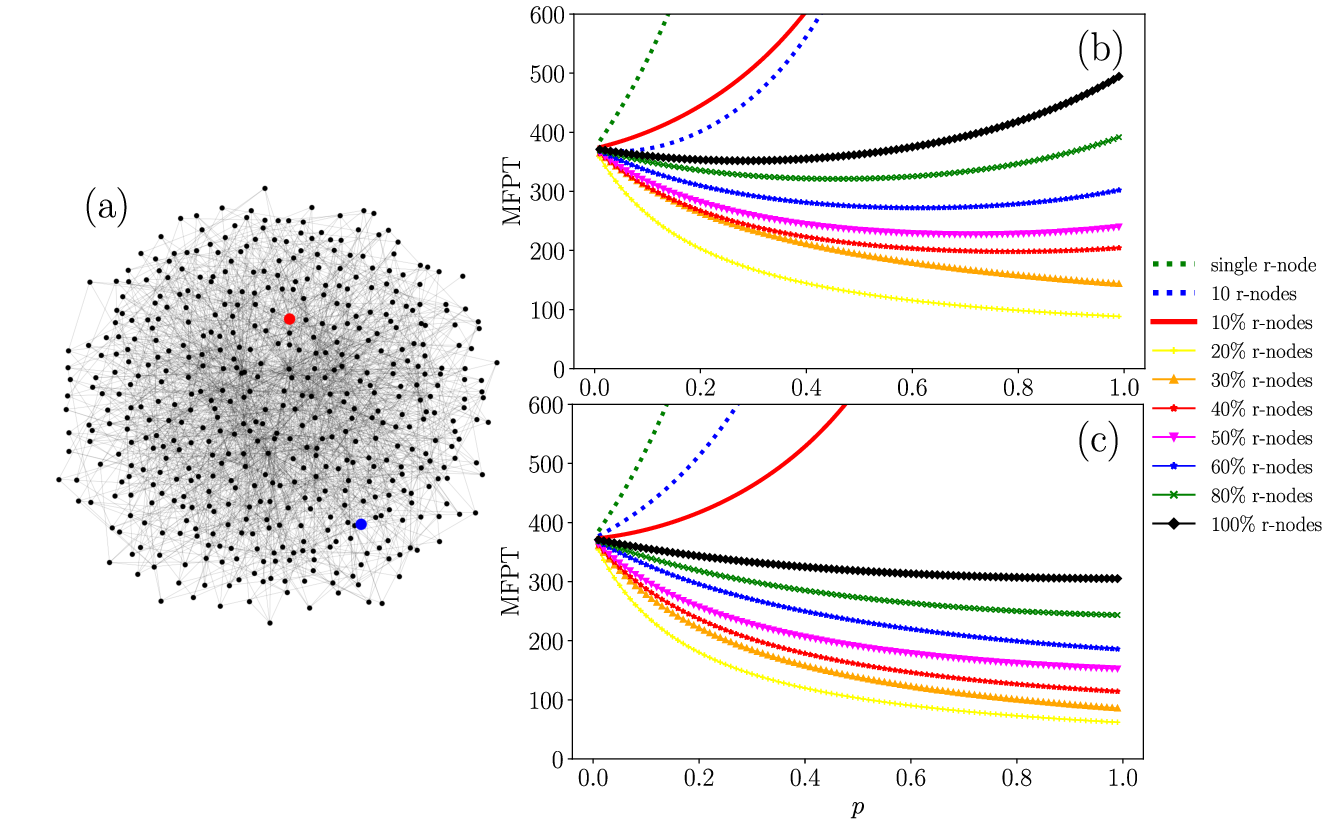}
}
\centerline{
\includegraphics[width=0.5\textwidth]{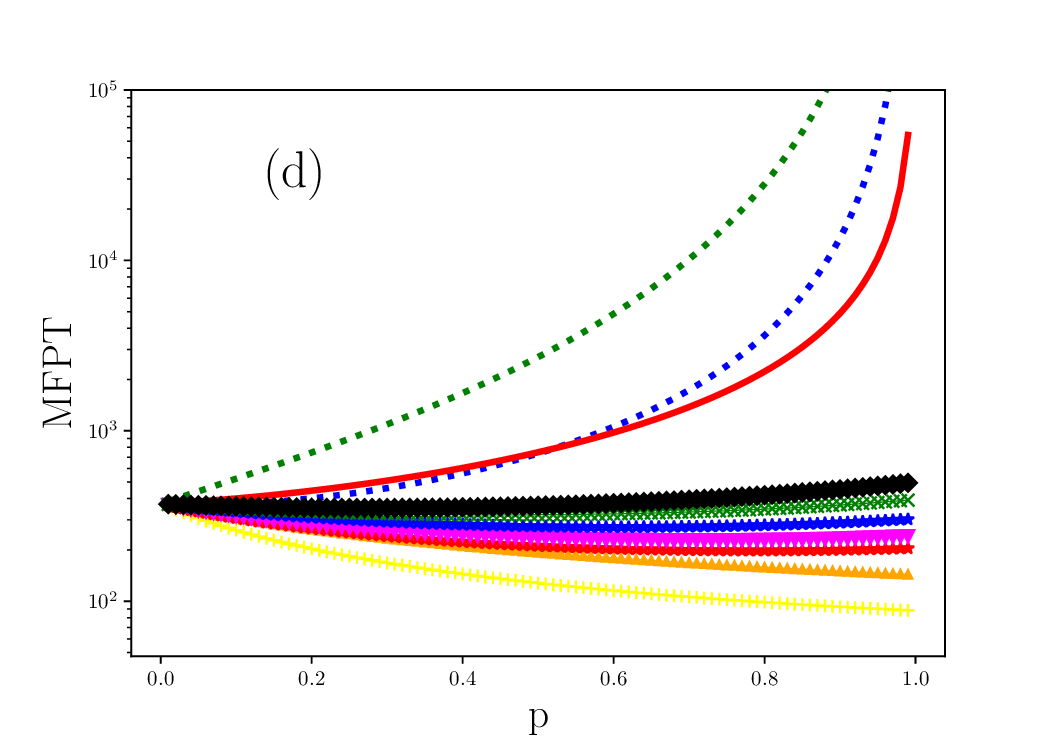}
\includegraphics[width=0.5\textwidth]{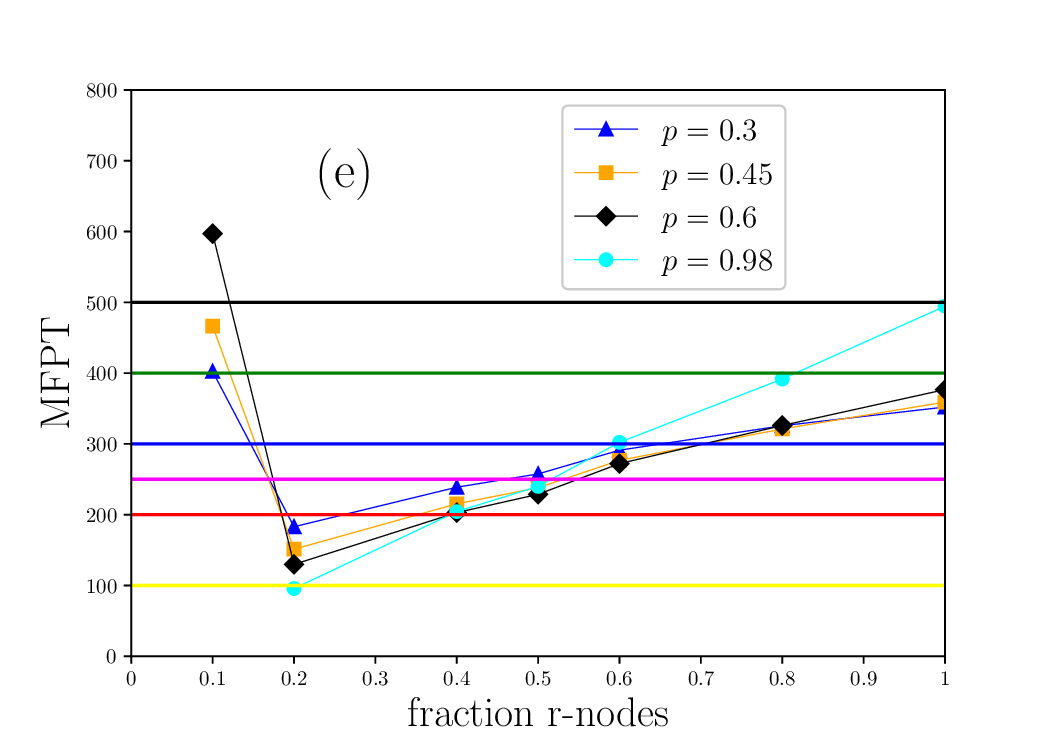}
}
\vspace{-2mm}
\caption{(a) Random search dynamics with resets on a realization of the  Barabási-Albert random graphs with $N=500$ nodes and attachment parameter $m=4$. 
Frames (b,c) show the MFPT $\langle T_{ij} \rangle$ of Eq.\ (\ref{represntation_Tij}) as a function of the Bernoulli probability $p$ 
for different relocation scenarios of the searcher. The sets of r-nodes to which resetting is allowed are
randomly selected by an independent Bernoulli trial for each node, generating homogeneously distributed fractions of r-node populations. 
We plot the MFPT for various fractions ($10\%$--$100\%$) of the r-node populations.
The starting node ($i=100$) is marked in blue, the target node ($j=200$) in red.
Frame (b): MFPT versus $p$ for uniform reset ($R_r=R$) to the r-nodes). Frame (c): MFPT versus $p$ for preferential resets to the r-nodes depending on the node degree ($R_r \propto K_r$). Frame (d): MFPT of (b) with a larger window. frame (e): MFPT as a function of the r-node fraction for some values of $p$ (same setting of (b)). }
\label{BA_network}
\end{figure}
In Fig.\ \ref{BA_network}, we depict the MFPT (\ref{represntation_Tij}) between two arbitrarily chosen nodes (see figure caption) as a function of $p$. 
We first determine the set of r-nodes, i.e.\ those nodes $k$ for which $R_k\neq 0$ with $N$ independent Bernoulli trials (one for each node), with success probabilities $0.1, 0.2,\ldots,0.8, 1$. This generates populations of respectively $10\%,20\%,\ldots,80\%,100\%$ of r-nodes. In Fig.\ \ref{BA_network}b the resetting to the r-nodes is performed with
uniform probability, that is $R_k =R$ if $k$ is an r-node and $R_k =0$ otherwise. Each curve corresponds to a certain fraction of r-nodes. Observe that if just $10\%$ of the nodes are r-nodes, the MFPT is increasing with $p$, i.e.\ resetting does not increase efficiency of a searcher. The same behavior is observed if we choose only one 
or $10$ r-nodes. In these cases of low r-node population, it is possible that the walker at each reset is relocated to a node far from the target, which may extend the time span until the target is found.
The behavior is suddenly changing when we increase the r-node population to $20\%$
and higher. In these cases the MFPT is decreasing with $p$ where the decrease is monotonic up to about $60\%$ of r-node population. For higher r-node populations the MFPT first decreases and then increases again with $p$, so that an optimal resetting rate $p_*$ for which the MFPT is minimal exists. The larger the r-node population is, the smaller $p_*$ becomes. This effect can be interpreted that for large r-node populations the walker is repeatedly relocated to nodes far away from the target node.

If the resetting probabilities are proportional to the degree $R_k \propto K_k$, and all other parameters remain the same (see Fig.\ \ref{BA_network}c), we have that for sufficiently large r-node populations ($>10\%$ ) the MFPT decreases monotonically with $p$, and the decrease becomes less pronounced the larger the proportion of the r-nodes is.
In addition, one can observe that the absolute values of the MFPTs for the same $p$ in the $m=2$ WS network, which we discuss in more details subsequently, are much greater than in the BA network, reflecting the large world feature of the WS network (see Fig.\ \ref{WS_network}).

In order to get a more global picture on the efficiency of a searcher to find a target in the BA graph, we plot in Fig. \ref{Kemeny_BA_WS} the Kemeny constant $\mathcal{K}(p)$ (Eq.\ (\ref{Kemeny_result})) as a function of $p$. Note that the Kemeny constant is monotonically decreasing with $p$. In other words, the expectation of the MFPT for a randomly chosen target $j$ (with NESS probability $P_j(p;R_R,\ldots,R_N;\infty)$, see (\ref{Kemeny_def})) 
decreases monotonically, approaching the minimum value of about $500=N$ for $p \to 1-$, which corresponds to the optimal one in the CC network (see (\ref{kemen_CC})).

The values of the MFPT in Fig. \ref{BA_network} are significantly smaller than the optimal value of the Kemeny constant $N=500$. Our interpretation is that the chosen target node $200$ in this network has a relatively large degree, $K_{200}=13$, compared to the average degree $\langle K\rangle \approx 7.94$. We also notice that the values of the MFPT approach constant values for $p\to 1$ only if the t-node is in the set ${\cal L}_R$ of r-nodes. We explain this feature hereafter in details.
It has to be pointed out 
that in the limit $p=1$ the walker visits only the r-nodes with ``steps'' (the resets) drawn from $\mathbf{R}$.
This holds independently of the network topology: the walker moves in a completely connected structure which is constituted by the set of r-nodes.
If the t-node is not an r-node, then the MFPT should tend to infinity as $p\to 1$, 
as the t-node cannot be reached. On the contrary if the t-node is an r-node, the MFPT approaches a constant value (see (\ref{important_case_set_tarket})). This behavior can clearly be identified in Figs. \ref{BA_network} and \ref{WS_network} where in both networks our choice is such that the t-node $j=200$ is not an r-node
for  one, $10$ r-nodes and for $10\%$ of r-node population. 
For all other cases  ($20\% - 100\%$ r-node fractions), the t-node $200$ is within the set ${\cal L}_R$ of r-nodes.
We illustrate these distinct behaviors in frame \ref{BA_network}(d) representing \ref{BA_network}(b) in a larger window.
Consider more closely the frame \ref{BA_network}(b) 
for the cases in which the t-node $j\in {\cal L}_R$ with uniform resetting probabilities $R_j=1/n_R$ where $n_R$
denotes the number of r-nodes. The finite value of the MFPT is such that $\lim_{p \to 1}\langle T_{100,200} \rangle = n_R$ (see  (\ref{important_case_set_tarket})) independently of the network topology (see also the WS case in Fig. \ref{WS_network})(b)).
For the mentioned cases of $20\%,...,100\%$ of r-nodes, we have the MFPT values $n_R = 100,\ldots,500$ 
at $p=1$, see again Fig. \ref{BA_network}(b). 
This behavior can be seen in frame \ref{BA_network}(e) which shows
$\langle T_{100,200}\rangle$ as a function of the r-node fractions for some fixed values of $p$. The cyan points correspond to the MFPTs at $p \approx 1$ and the horizontal lines represent the corresponding values of $n_R$. 
We use for the latter the same color codes as in frames \ref{BA_network}(b,d). 
We explain the non-monotonous behavior of the MFPT which occurs 
for $10\%$ to $20\%$ of r-nodes (see panel \ref{BA_network}(e)) by the fact that in the first case 
the t-node is not r-node, whereas is in the latter. In the $10\%$ case the target can only be found between consecutive resets where the walker performs “local steps” to next neighbor nodes drawn from $\mathbf{W}$. For increasing $p$ it becomes more and more “difficult” to reach it and for $p = 1$ impossible. This explains the divergence of the MFPT
for $p\to 1$ in the cases in which the t-node $200 \notin{\cal L}_R$ (see panel \ref{BA_network}(d)).
On the contrary, if $j \in {\cal L}_R$, the probability to reach the t-node increases with $p$ as long as $n_R$ is sufficiently small. 
To give 
evidence for this behavior, we depict in Fig. \ref{new_fig2} the MFPT to the t-node $200$ (the departure node is $100$).
 We use identical parameters and BA realization as in Fig. \ref{BA_network}. The dashed curves indicate the MFPTs, where the t-node $200$ is excluded from the set of r-nodes.
 The continuous curves correspond to situations where the t-node is included into the set of r-nodes.
For $n_R<N$ the ergodicity of the walk is lost at $p=1$. The behaviors observed in the WS graph considered next, are consistent with the above interpretation (see Fig. \ref{WS_network}).
\begin{figure}[t!]
%
\centerline{
\includegraphics[width=0.5\textwidth]{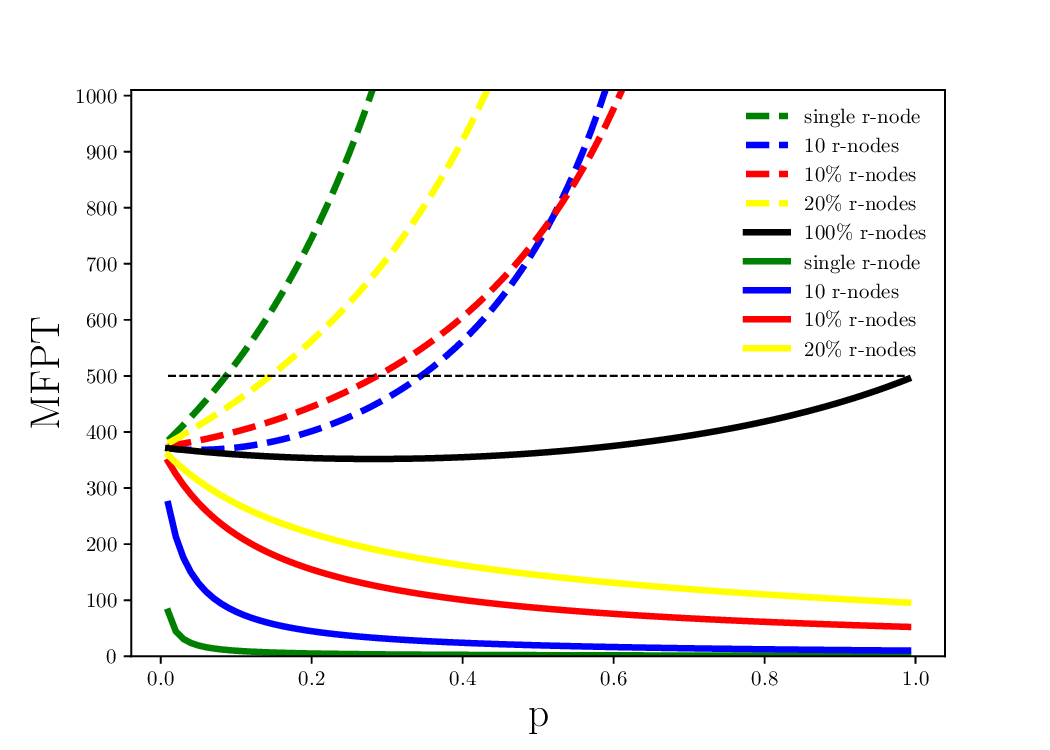}
\includegraphics[width=0.5\textwidth]{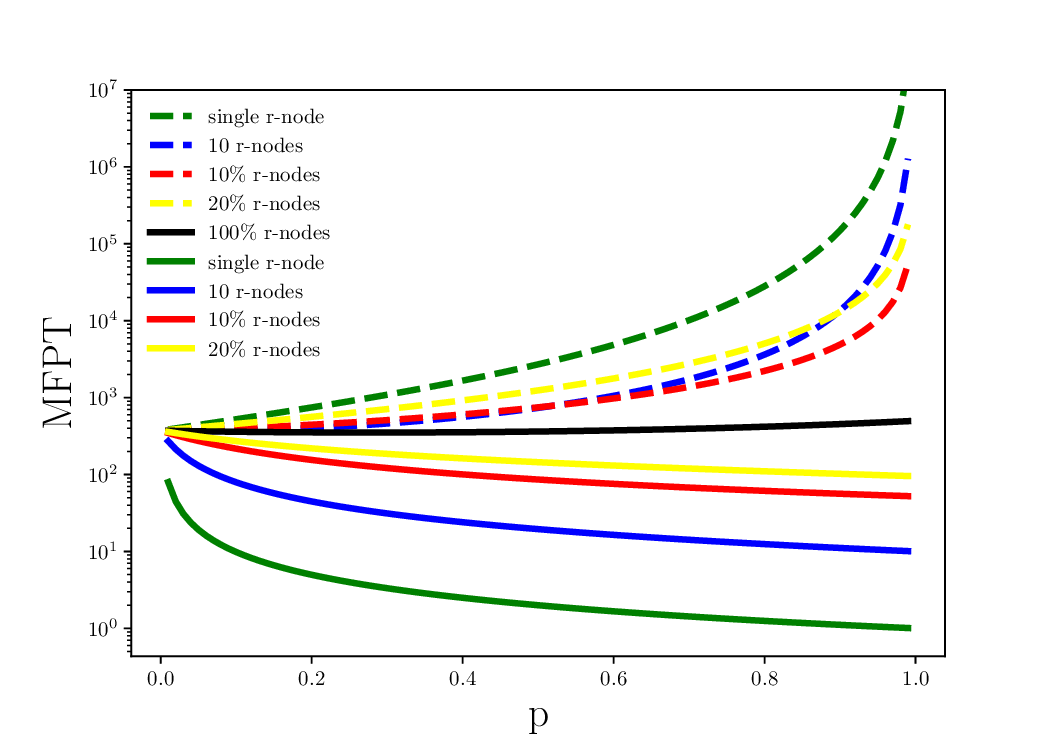}
}
\vspace{-2mm}
\caption{ MFPT $\langle T_{100,200}\rangle$ for uniform resetting probabilities $R_r=1/n_R$.
The situations of $j=200 \notin {\cal L}_R$
and $j=200 \in  {\cal L}_R$ are indicated by dashed and continuous curves, respectively. All parameters and the network are identical as in Fig. \ref{BA_network}.  }
\label{new_fig2}
\end{figure}

\paragraph{Watts-Strogatz graph} 
%
%
%
\begin{figure}[t!]
\centerline{
\includegraphics[width=0.8\textwidth]{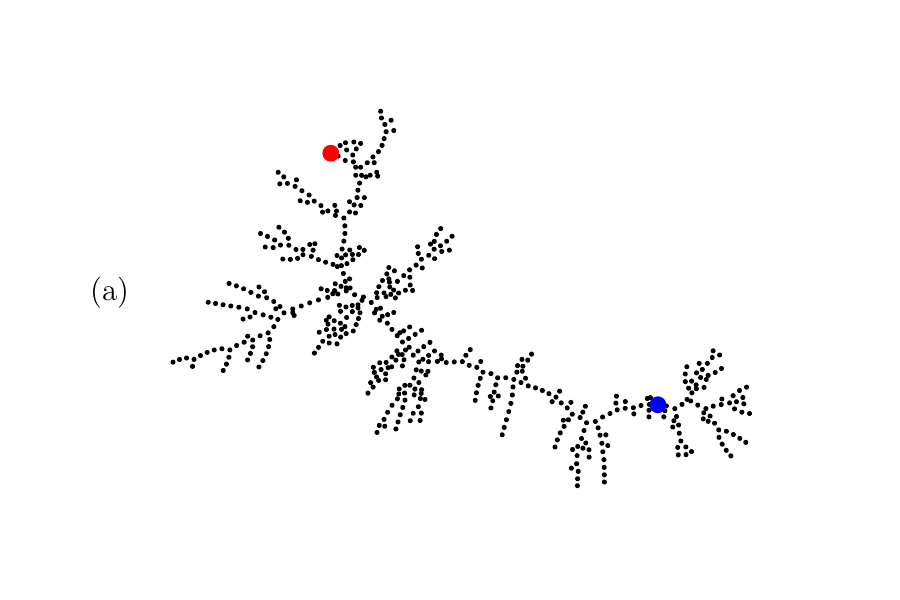}
}
\vspace{-2cm}
\centerline{
\includegraphics[width=0.5\textwidth]{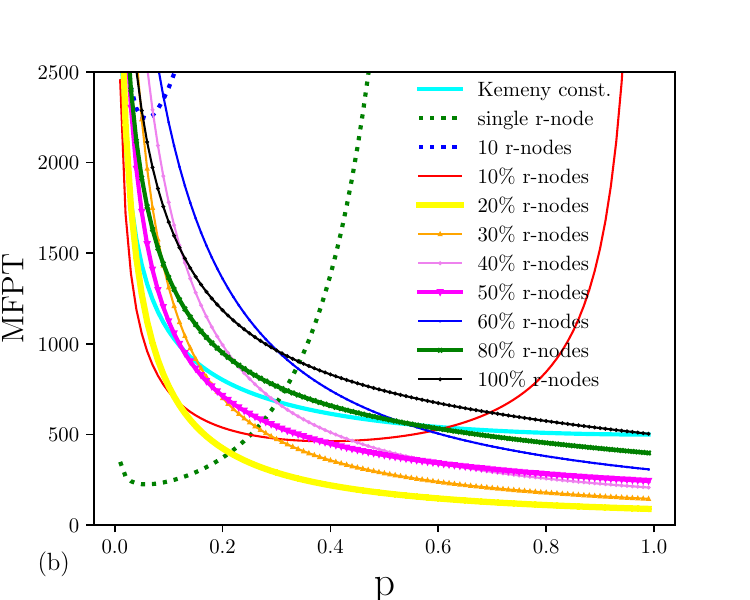}
\includegraphics[width=0.5\textwidth]{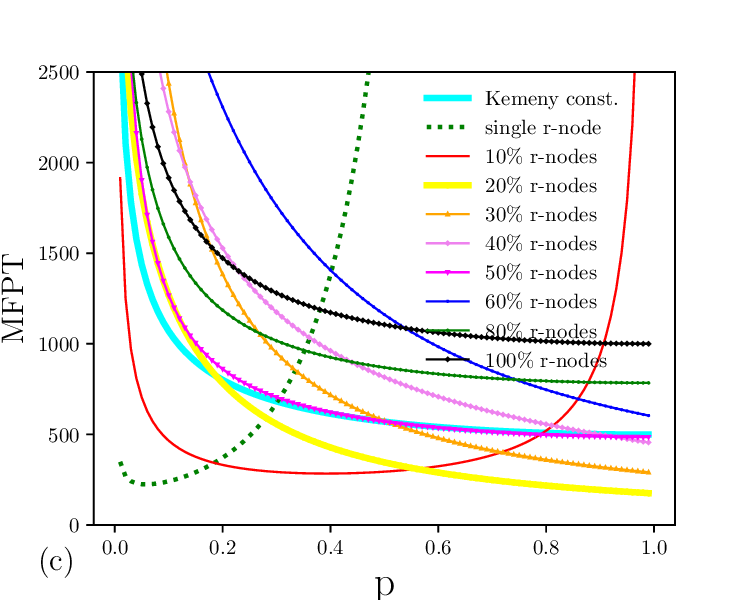}
}
\vspace{-2mm}
\caption{(a) Random search with resets between the starting node (blue) and the target node (red) in a Watts-Strogatz graph with $N=500$ nodes, attachment parameter $m=2$ (large-world) and rewiring probability $0.7$.
The Kemeny constant (Eq.\ (\ref{Kemeny_result})) is pictured in  (b) and (c) in cyan color.
We plot $\langle T_{ij} \rangle$ (Eq.\ (\ref{represntation_Tij})) with respect to $p$ for the starting node $i=100$ and the target node $j=200$. 
(b) Same resetting scenarios and color codes as in Fig.\ \ref {BA_network}(b) with uniform resetting probabilities ($R_r=R$) to the r-nodes. (c) Same resetting scenarios and color codes as in Fig.\ \ref{BA_network}(c) with preferential resetting to the r-nodes ($R_r \propto K_r$).
In both frames, the t-node $200$ is excluded from ${\cal L}_R$ for
one, $10$ and $10\%$ of r-nodes, whereas it is included for all other cases. }
\label{WS_network}
\end{figure}
\begin{figure}[t!]
\centerline{
\includegraphics[width=0.9\textwidth]{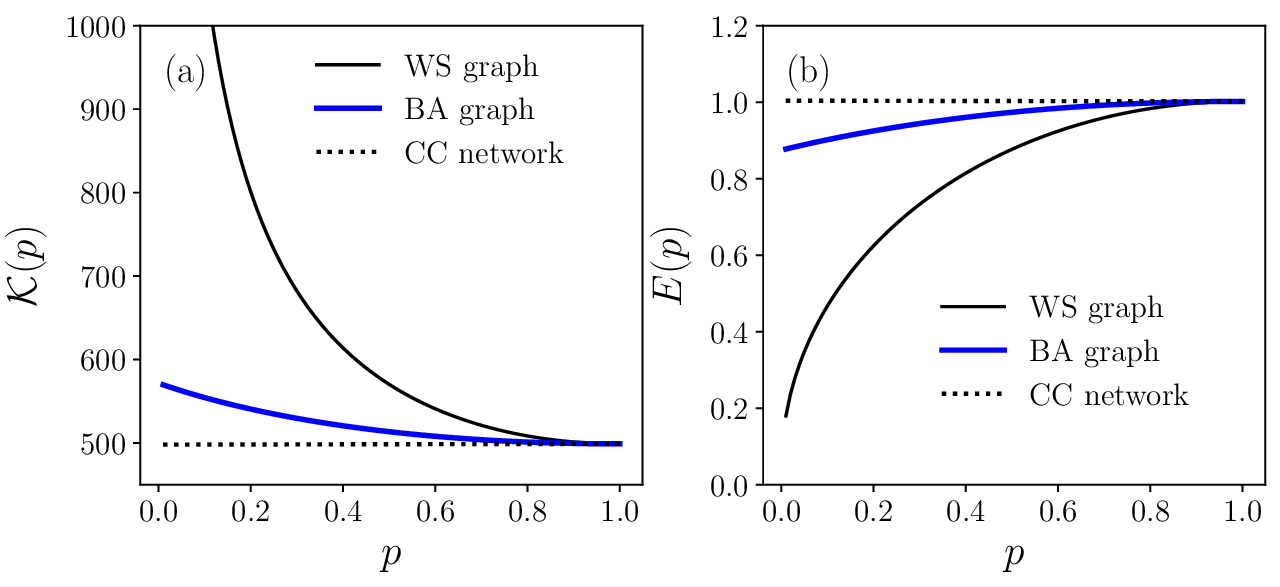}
}
\vspace{-2mm}
\caption{Comparison of the (a) Kemeny constant ${\cal K}(p)$ and (b) search efficiency $E(p)$ of Eq.\ (\ref{searcher_efficieny}) for the dynamics with resets in the Barabási-Albert and 
Watts-Strogatz graphs. The parameters are those of Figs.\ \ref{BA_network} and \ref{WS_network}. The dotted lines denote
the CC graph with ${\cal K}_{CC} \approx N=500$ and $E_{CC}(p) \approx 1$ ($N \gg 1$).}
\label{Kemeny_BA_WS}
\end{figure}
To generate a WS graph \cite{WattsStrogatz1998} one starts with a ring of $N$ nodes where each node is symmetrically connected with $m <N$ neighbor nodes. In this way,
each node has $2m$ connections. In a second step, each connection of a node is replaced, with constant probability $p_0$, 
by a connection to randomly chosen nodes (avoiding link duplication and self connections). If the rewiring probability $p_0=0$, then 
the ring with constant degree $2m$ is retained, whereas if $p_0=1$ the resulting graph is an Erd\"os-Renyi graph.
The WS graph is generally small world, except for $m=2$, which we use here (see Fig.\ \ref{WS_network}), for which the graph exhibits 
large distances between nodes.

Let us explore now the effect of the resetting when the searcher operates in a large world structure such as the WS network for $m=2$ (Fig.\ \ref{WS_network}). We observe that for both resetting strategies (see frames \ref{WS_network}(b,c), uniform and preferential relocations, respectively), the MFPT between the considered nodes exhibits strong decay with increasing resetting rate, as soon as there are sufficiently many r-nodes. This holds for both strategies. One observes that the effect of the resetting on the search efficiency is much more pronounced in large world networks than in small world networks (compare Figs.\ \ref{BA_network}(b,c) and \ref{WS_network}(b,c)). The presence of resetting significantly reduces the MFPT in some cases. We explain this feature subsequently.
The main observation is that the presence of resetting is especially useful to increase the search efficiency in large world architectures, as it reduces the MFPT between distant nodes. This effect persists for $p\to 1$
if the t-node is contained in the set of r-nodes.
In addition, one intuitively infers that in this way resets occur to nodes which are not that far from the 
target node, allowing the searcher to find it more quickly. In Figs.\ \ref{WS_network}(b,c), we have also represented the dependence of the Kemeny constant on $p$ (cyan curves). One can see that
for $p\to 1$ the minimum value $N=500$ is approached, with maximal search efficiency $E(p) \to 1$.

In order to give a visual impression of this complex dynamics, we present videos of two realizations of this walk, which can be launched by clicking online on the words "simulation video" marked hereafter.  
In these animations, the nodes visited by the walker are colored in orange. The resets occur with uniform probability to any node of the network (corresponding to $100\%$ of r-node population, cf.\ Fig.\ \ref{WS_network}(b)) and the runtime in both simulations is $300$. The 
\href{https://drive.google.com/file/d/1bdEzrpH1NDYq8IOCSRkAtuqQLS1_YnKQ/view}{\textcolor{violet}{\it first simulation video}} is characterized by the resetting rate $p=0.05$. In the \href{https://drive.google.com/file/d/1B1AdQfj-J7-JdK-BuubxgP3jKV68BtNo/view?usp=sharing}{\textcolor{violet}{\it second simulation video}}, $p=0.8$. We determined numerically the number of visited nodes during the runtime. In the first simulation the number of visited nodes is $105$, and in the second one $212$. This empirically shows the faster exploration of the graph (and hence shorter MFPTs) when the resetting rate is increased.

In Fig.\ \ref{Kemeny_BA_WS}, we show the Kemeny constants and search efficiencies of the above considered graphs.
We remark once again that the
resetting most efficiently reduces the MFPT for the WS graph. One can also note that the values of the Kemeny constant are similar to those of the respective MFPTs (Figs.\ \ref{BA_network}, \ref{WS_network}).
For both the WS and the BA graph the optimal resetting rate $p_*$ minimizing the Kemeny constant is close to one, whereas does not exist for the CC graph.
\section{First hitting statistics for non-Markovian RRPs}
\subsection{Basic model}
\label{FPH-quantities}
Here, our aim is to investigate the first hitting statistics of a Markovian random walker for arbitrary RRPs with focus on non-Markovian ones and arbitrary set of target nodes. 
The walker is killed (i.e.\ removed from the network) as soon as it reaches one of the t-nodes. 
We refer to this time instant as the first hitting time (FHT) \cite{Redner2001}. 
We exclude killing at $t=0$.
Clearly, the FHT is the number of steps the walker needs to hit one of the t-nodes for the first time.
In a searching framework, the hitting of a t-node represents a \emph{success}.
We refer to \cite{PalSandev2023} for a continuous space model with killing targets and to  
\cite{Masuda-et-al2017,BasolasNicosia2021} for models based on absorbing states, and consult also \cite{ChenYe2022}.
Recall that $\mathbf{W}$ ($W_{ij}=A_{ij}/K_i$) is the transition matrix of the considered Markov walk without killing.
We talk about \emph{Auxiliary Walk} (AW) 
if the killing condition on t-nodes is included.
In this view, it is useful to consider
the matrix ${\widetilde {\bf W}}$ related to the AW killed at the FHT.
Let ${\cal B}$ denote a set of t-nodes and introduce the diagonal matrix $\mathbf{\Theta}({\cal B})$
containing ``$1$'' in the diagonal positions which correspond to t-nodes and ``$0$'' elsewhere, thus
$\Theta(i;{\cal B}) = \sum_{b \in {\cal B}}\delta_{ib}$ defines an indicator function such that
$\Theta(i;{\cal B})=1$ if $i \in {\cal B}$ and $\Theta(i;{\cal B})=0$ otherwise.
We further introduce $\mathbf{1}-\mathbf{\Theta}({\cal B})$ as the diagonal matrix with the complementary entries.
For the AW we have
\beq
\label{transition_mat_mod} 
\ds {\widetilde {\bf W}} =   \mathbf{W} \cdot [\mathbf{1} - \mathbf{\Theta}({\cal B})]  , 
\hspace{1cm} {\widetilde W}_{ij} = W_{ij}[1-\Theta(j;{\cal B})]
\eeq
with entries ${\widetilde W}_{ij} =0$ if $j \in {\cal B}$ and
${\widetilde W}_{ij} = W_{ij} =  A_{ij}/K_i$ otherwise.
Observe that the matrix ${\widetilde {\bf W}}$ is defective, i.e.\ not all rows are properly normalized 
$\sum_{j=1}^N {\widetilde W}_{ij} =1-\sum_{j\in {\cal B}} W_{ij} = q_i \leq 1 $ where $q_i<1$ for some $i$ (labeling neighbors of t-nodes). 
The defective feature of ${\widetilde {\bf W}}$ is inherited by its powers ${\widetilde {\bf W}}^k$, $k \geq 1$, which we 
consider in more details later on. Consult Appendix \ref{fist_encouter_paths} for technical details.
In this construction, the walker can start the walk from a t-node without being killed at $t=0$. 
This behavior is ensured by defining
\beq
\label{case_kzero}
{\widetilde {\bf W}}^0 = \mathbf{1} = [ \delta_{ij} ] , \hspace{1cm} i,j =1,\ldots, N. 
\eeq
An AW walker that departs from a t-node is killed only at its first return (unless hitting another t-node before).
The entry $[{\widetilde {\bf W}}]^t_{ij}$ denotes the probability that the AW walker, starting from node $i$, reaches node $j$ alive at time $t$ (i.e.\ it has never hit a t-node before time $t$). For $j \in {\cal B}$ per construction, this probability is null for all $t\geq 1$. The sum $\sum_{j=1}^N[{\widetilde {\bf W}}]^t_{ij}$ is therefore the probability that the walker is somewhere on the network and alive at time $t$. We will interpret this quantity as the survival probability of the AW walker (see (\ref{survival_monkey})). Further, this coincides with the probability that the Markovian walker 
with propagator $\mathbf{W}^t$ has never hit any t-node up to time $t$. 

To include the feature that the walker is killed in a reset to t-node as well, we 
modify the resetting matrix as
\beq
\label{mod_reset_mat}
{\widetilde {\bf R}} =  \mathbf{R} \cdot  
[\mathbf{1} - \mathbf{\Theta}({\cal B})]  , \hspace{1cm}{\widetilde R}_{ij} = R_j [1-\Theta(j;{\cal B})]
\eeq
where ${\widetilde R}_{ij} =R_j$ if $j \notin {\cal B}$ and ${\widetilde R}_{ij} =0$ for $j \in {\cal B}$.
We observe the following properties:
$\sum_{j=1}^N{\widetilde W}_{ij} = q_i \leq 1$ and $\sum_{i=1}^N q_i < N$, i.e.\ ${\widetilde {\bf W}}$ is defective for any target ${\cal B} \neq \emptyset$. On the other hand we have
$\sum_{j=1}^N{\widetilde R}_{ij} =r_i = 1 -\sum_{j \in {\cal B}} R_j \leq 1$, where $\sum_{i=1}^N r_i \leq N$, and the equality is attained if ${\cal L}_R \cap {\cal B} = \emptyset$, which  represents the case in which ${\widetilde {\bf R}} = \mathbf{R}$ remains row-stochastic.

Now we introduce the survival probability of the AW walker on the network (probability that it has not hit the target up to and including time $t$) with starting node $i$ as\footnote{For brevity, we will sometimes use the notation $M_i = \sum_{j=1}^N M_{ij}$.}
\beq
\label{survival_monkey}
\Lambda_i(t) = \sum_{j=1}^N [{\widetilde {\bf W}}^t]_{ij} =  [{\widetilde {\bf W}}^t]_i  = 1 - \sum_{r=1}^t \chi_i(r) , \hspace{1cm} t \in \mathbb{N}_0
\eeq
which is non-increasing with time. Further we set $\Lambda_i(0)=1$ for all $i=1,\ldots, N$ (including $i \in {\cal B}$, 
see (\ref{case_kzero})) and note $0 \leq \Lambda_i(t) \leq 1$.
The AW propagator ${\widetilde {\bf W}}^t$ contains the statistics on the first hitting to the target ${\cal B}$ performed by the Markovian walker $\mathbf{W}^t$
(see Appendix \ref{fist_encouter_paths}
for more details).
In (\ref{survival_monkey}) we introduced the first hitting time PDF 
$\chi_i(t) = \mathbb{P}[T_i=t]$
as the probability that the walker hits one of the t-nodes for the first time at time $t$.
The first hitting time PDF for a starting node $i$ then reads
\beq
\label{MFPD}
\chi_i(t)= \Lambda_i(t-1) - \Lambda_i(t) , \hspace{1cm} t \geq 1, 
\eeq
where we considered that $\chi_i(0) = 0$ as $T_i \geq 1$ ($\Lambda_i(0)=1$). Its GF is given by
\beq
\label{MFPD_GF}
{\bar \chi}_i(u) = 1-(1-u){\bar \Lambda}_i(u) , \hspace{1cm} |u| \leq 1
\eeq
with ${\bar \Lambda}_i(u)= [(\mathbf{1}-u{\widetilde {\bf W}})^{-1}]_i$.
In Appendix \ref{fist_encouter_paths} we show indeed that $\chi_i(t) = F_{ib}(t)$ coincides with (\ref{first-passage_proba}) for a single t-node $b$.
We point out that the AW propagator
${\widetilde {\bf W}}^t$ does not have the canonical representation (\ref{purely_markovian}). In particular,
it has no unique eigenvalue equal to one, thus reflecting the lack of a proper row-normalization. The FHT PDF $\chi_i(t)$ is a properly normalized PDF only if $\Lambda_i(\infty)=0$, which indeed holds true for the AW propagator ${\widetilde {\bf W}}^t$ 
as ${\widetilde {\bf W}}^t \to \mathbf{0}$ for $t\to \infty$. We have

\begin{thm}\label{theorem}
    The matrix ${\widetilde {\bf W}}$ has spectral radius $\rho({\widetilde {\bf W}}) = \max(|{\tilde \lambda}_k|) < 1$.
\end{thm}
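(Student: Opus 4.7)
The plan is to exploit three features of ${\widetilde {\bf W}}$: it is entry-wise non-negative (inherited from $\mathbf{W}$), it is sub-stochastic (row sums $q_i = 1-\sum_{j\in\mathcal{B}} W_{ij} \leq 1$, with strict inequality whenever $i$ is a neighbor of $\mathcal{B}$), and the underlying graph is connected. The strategy is to show that some finite power ${\widetilde {\bf W}}^{t_0}$ has maximum absolute row sum strictly less than $1$, which together with $\rho({\widetilde {\bf W}})^{t_0} = \rho({\widetilde {\bf W}}^{t_0}) \leq \|{\widetilde {\bf W}}^{t_0}\|_\infty$ yields the claim.

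The key observation is that the row sum of ${\widetilde {\bf W}}^t$ equals the survival probability $\Lambda_i(t)$ defined in \eqref{survival_monkey}. So the goal reduces to showing that there exists a finite $t_0$ with $\Lambda_i(t_0) < 1$ for every starting node $i=1,\ldots,N$. Since the graph is connected, for any $i \notin \mathcal{B}$ there is a path in $\mathbf{W}$ leading to some $b \in \mathcal{B}$, hence an integer $t_i \geq 1$ with $[\mathbf{W}^{t_i}]_{ib} > 0$. The probability that the Markov walk hits $\mathcal{B}$ within $t_i$ steps is therefore strictly positive, which by the identification discussed in the paragraph preceding the theorem translates into $\Lambda_i(t_i) < 1$. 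For $i \in \mathcal{B}$ we have $\Lambda_i(1) = q_i < 1$ directly. Setting $t_0 = \max_i t_i$ (finite because $N$ is finite) and using that $\Lambda_i(t)$ is non-increasing in $t$, we obtain $\Lambda_i(t_0) < 1$ for every $i$.

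Consequently,
\begin{equation*}
\|{\widetilde {\bf W}}^{t_0}\|_\infty = \max_{i} \sum_{j=1}^N [{\widetilde {\bf W}}^{t_0}]_{ij} = \max_i \Lambda_i(t_0) < 1 .
\end{equation*}
Invoking the standard inequality $\rho(M)^k \leq \|M^k\|$ for any submultiplicative matrix norm (or, equivalently, Gelfand's formula $\rho(M) = \lim_{k\to\infty}\|M^k\|^{1/k}$) with $M = {\widetilde {\bf W}}$, $k = t_0$, yields
\begin{equation*}
\rho({\widetilde {\bf W}}) \leq \|{\widetilde {\bf W}}^{t_0}\|_\infty^{1/t_0} < 1 ,
\end{equation*}
which is the desired bound. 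Equivalently, one may appeal to Perron--Frobenius: were $\rho({\widetilde {\bf W}}) = 1$, a non-negative right eigenvector $v \neq 0$ would satisfy ${\widetilde {\bf W}}^{t_0} v = v$, contradicting $\|{\widetilde {\bf W}}^{t_0}\|_\infty < 1$ applied to $v$.

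The only delicate step is verifying that every node can reach $\mathcal{B}$ in finitely many $\mathbf{W}$-steps with positive probability; this is exactly where connectivity of the underlying graph (and the assumption $\mathcal{B} \neq \emptyset$) enters. Aperiodicity is not needed for the spectral radius bound, only for uniqueness of the Perron eigenvalue of $\mathbf{W}$ itself. Note also that the bound $\rho({\widetilde {\bf W}}) < 1$ is precisely what guarantees the convergence ${\widetilde {\bf W}}^t \to \mathbf{0}$ used right before the theorem to conclude that $\chi_i(t)$ is a proper (non-defective) PDF, so the theorem closes the loop on that claim.
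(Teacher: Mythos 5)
Your overall strategy is essentially the paper's: exhibit a finite power of ${\widetilde {\bf W}}$ all of whose row sums are strictly below one, and conclude that the spectral radius is below one. The paper obtains the uniform power in one stroke from primitivity of $\mathbf{W}$ (choosing $k_0$ with $\mathbf{W}^{k_0}$ entrywise positive, so that every row of $\mathbf{Q}=\mathbf{W}^{k_0}\cdot[\mathbf{1}-\mathbf{\Theta}({\cal B})]$ loses mass, and then iterating $\Lambda_i(nk_0)\leq Q^n\to 0$), whereas you build $t_0$ node by node from connectivity and take a maximum; your route has the small advantage of not invoking aperiodicity, and your explicit appeal to $\rho(M)^{t_0}\leq\|M^{t_0}\|_\infty$ makes the final spectral step cleaner than the paper's implicit ``powers tend to zero'' conclusion.

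There is, however, one incorrect step: the claim that for $i\in{\cal B}$ one has $\Lambda_i(1)=q_i<1$ ``directly''. Since $q_i=1-\sum_{j\in{\cal B}}W_{ij}$ and self-loops are excluded ($A_{ii}=0$), a t-node whose neighbors all lie outside ${\cal B}$ --- for instance the generic case of a single target node --- has $q_i=1$ and hence $\Lambda_i(1)=1$. Your own first paragraph states the strictness correctly (``whenever $i$ is a neighbor of ${\cal B}$''), and a t-node need not be a neighbor of a t-node. The fix is immediate and stays entirely within your argument: for $i\in{\cal B}$ pick any neighbor $j$ of $i$; then $[\mathbf{W}^2]_{ii}\geq W_{ij}W_{ji}>0$, so the walker starting at $i$ (which is not killed at $t=0$) hits ${\cal B}$ by time $2$ with positive probability and $\Lambda_i(2)<1$. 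In other words, simply include $i\in{\cal B}$ in the connectivity argument with $b=i$ and $t_i=2$. With that repair the proof is complete and correct.
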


\begin{proof}
We first note that
the Markovian walker $\mathbf{W}^t$ realizes an irreducible and aperiodic Markov chain \cite{Newman2010,fractionalbook-2019,NohRieger2004,VanMiegem2011,Birkhoff1931,Lefebvre2007}\footnote{Recall, a non-negative  $N\times N$ matrix $\mathbf{M}$ is referred to as {\it primitive} (synonymously ``irreducible aperiodic'' or ``ergodic''), if there is a positive integer $k_0'$ such that the matrix powers $\mathbf{M}^k$ for $k\geq k_0'$ are positive matrices $[\mathbf{M}^k]_{ij} >0$, ($i,j=1,\ldots,N$) \cite{Meyer2000}.}. 
Further, although $\mathbf{W}$ is primitive, ${\widetilde {\bf W}}$ is not, as matrix powers of the latter preserve the zero columns which label t-nodes.
Let $k_0 \in \mathbb{N}$ be such that $\mathbf{W}^{k_0}$ is positive.
We introduce the auxiliary matrix $\mathbf{Q} = [Q_{ij}] = \mathbf{W}^{k_0}\cdot [\mathbf{1} - \mathbf{\Theta}({\cal B})]$. 
It holds
\beq
\label{inqual_ergo}
\sum_{j=1}^N [{\widetilde {\bf W}}^{k_0}]_{ij} \leq Q_i= \sum_{j=1}^N Q_{ij} =1- \sum_{j\in {\cal B}} [\mathbf{W}^{k_0}]_{ij} < 1 , \hspace{1cm} i = 1, \ldots, N,
\eeq
with $Q_i < 1$ for all rows $i= 1, \ldots, N$.
Let $Q=\max(Q_i)$ and consider the matrix powers ${\widetilde {\bf W}}^{k_0 n}$, $n\in \mathbb{N}$,
for $n \to \infty$.
This leads, for all $i=1,\ldots, N$, to the inequalities 
\beq
\label{power_tilde_W}
 \Lambda_i(nk_0) =  \sum_{j=1}^N [{\widetilde {\bf W}}^{k_0 n}]_{ij}  \leq  \sum_{j=1}^N[\mathbf{Q}^n]_{ij} =
\sum_{k_1=1}^N \ldots \sum_{k_{n-1}=1}^N  Q_{i k_1} \ldots Q_{k_{n-2},k_{n-1}}  \sum_{j=1}^N Q_{k_{n-1},j} 
   \leq  Q^n  \to 0.
\eeq
This concludes the proof.
\end{proof}

Note that, for our goals, it is not necessary to consider further spectral details of ${\widetilde {\bf W}}$.

From the geometric decay of (\ref{power_tilde_W}) it follows that all the moments of the first hitting time are finite. Indeed, they are given by 
\beq
\label{FHT_moments}
\big\langle  T_i^m \big\rangle = (-1)^m \frac{d^m}{ds^m}{\bar \chi}_i(e^{-s})  \bigg|_{s=0} , \hspace{1cm} m \in \mathbb{N}.
\eeq
For $m=1$ one gets the mean first hitting time (MFHT). Indeed, from the relation $ {\bar \Lambda}_i(u) =(1-{\bar \chi}_i(u))/(1-u)$, we have
\beq
\label{MFPT_non_Markov}
\big\langle T_i \big\rangle = \frac{d}{du}{\bar \chi}_i(u)  \bigg|_{u=1}  =  {\bar \Lambda}_i(u) \bigg|_{u=1} = 
[(\mathbf{1}-{\widetilde {\bf W}})^{-1}]_i
\eeq
which has to coincide, for a single target, with (\ref{MFPT})
(in the case of a very small resetting rate).
Observe that $\mathbf{1}-{\widetilde {\bf W}}$ is invertible (Theorem \ref{theorem}). 
We numerically verified the accordance of (\ref{MFPT}) (for a very small resetting rate $p$) and (\ref{MFPT_non_Markov}) for a single 
t-node $j=200$ and starting node $i=100$.
For the WS network of Fig.\ \ref{WS_network}
we obtain, with $p=10^{-9}$, $\langle T_{100,200}\rangle \approx 32398$ from both Eqs.\ (\ref{MFPT}) and (\ref{MFPT_non_Markov}), and the accordance is maintained for different values of the parameters. Further, for the considered realization of the BA network, we obtain $\langle T_{100,200}\rangle \approx 387.69$.
We point out that \eqref{power_tilde_W}
tells us that in an ergodic Markovian walk any target is hit with probability one. We will show  later that this remains true under resetting as long as the resulting associated walk is ergodic. We point out that in the Markov case, i.e. when the RRP is a Bernoulli process, the killing t-nodes approach has a connection to absorbing Markov chains. In particular consult \cite{GrinsteadSnell2006}, Theorem 11.3. We outline more details in Appendix \ref{AW_AMC}. 
\\ 
The situation may change 
to $\Lambda_i(\infty) >0$
under resetting when additional conditions are met, destroying ergodicity of the associated walk. 
We will consider at the end of this section such a case and explore the issue of ergodicity more closely.
With these considerations, we are ready to establish the defective propagator of the {\it auxiliary walk under resetting -- (AWR)} as
\beq
\label{walk_with_resets_def}
\begin{array}{lll}
\ds  {\widetilde {\bf P}}_{\text{AWR}}({\widetilde {\bf W}}, {\widetilde {\bf R}} ;t)  & =  {\widetilde {\bf W}}^t  \big\langle \Theta(\Delta t_1-1-t) \big\rangle   &  \\ \\ &  \ds
+  \sum_{n=1}^{\infty}  \big\langle \Theta(J_n,t,J_{n+1})  ({\widetilde {\bf W}}^{\Delta t_1-1}\cdot {\widetilde {\bf R}}) \cdot 
 \ldots \cdot ({\widetilde {\bf W}}^{\Delta t_n-1}\cdot {\widetilde {\bf R}}) \cdot {\widetilde {\bf W}}^{t-J_n} \big\rangle 
 &  
 \end{array}
\eeq
which is supported on $t \in \mathbb{N}_0$ and fulfills the initial condition $ {\widetilde {\bf P}}_{\text{AWR}}({\widetilde {\bf W}}, {\widetilde {\bf R}} ;0) = \mathbf{1}$. At each integer time instant, the AWR-walker either performs a step or it is reset until being killed by visiting a t-node. 
We will refer to (\ref{walk_with_resets_def}) as the {\it survival propagator (SP)}
as it contains the first hitting statistics of given t-node set ${\cal B}$ 
of the (row-stochastic) propagator ${\bf P}(\mathbf{W}, \mathbf{R} ;t)$ (\ref{walk_with_resets}).
The latter is recovered when we replace in (\ref{walk_with_resets_def})
${\widetilde {\bf W}}$ with $\mathbf{W}$ and ${\widetilde {\bf R}}$ with $\mathbf{R}$. 
We have $[\mathbf{W}^k \cdot \mathbf{R}]_{ij} =\sum_{k=1}^N [\mathbf{W}^k]_{kj} R_j  = R_j $ which leads us to
$${\bf W}^{\Delta t_1-1}\cdot {\widetilde {\bf R}} \cdot \ldots \cdot {\bf W}^{\Delta t_n-1}\cdot {\bf R}= \mathbf{R}^n =\mathbf{R}, \hspace{1cm} n \geq 1. $$ 
We refer from now on (\ref{walk_with_resets}) to as \emph{associated walk}. 
The SP only considers the sample paths which have not reached a t-node up to and including time $t$. These sample paths occur in the AWR with the same probabilities as in the associated walk.
The SP entry $[{\widetilde {\bf P}}_{\text{AWR}}({\widetilde {\bf W}}, {\widetilde {\bf R}} ;t)]_{ij}$ is the probability that the AWR walker, starting from node $i$, reaches node $j$ alive at time $t$.
In (\ref{walk_with_resets_def}), the memory terms $ ({\widetilde {\bf W}}^{\Delta t_1-1}\cdot {\widetilde {\bf R}}) \cdot \ldots \cdot ({\widetilde {\bf W}}^{\Delta t_n-1}\cdot {\widetilde {\bf R}})$ contain the history of killing events which happened prior to the $n$-th reset. 

Worthy of mention is the situation when ${\cal L}_R \subseteq {\cal B}$ where the walker is always reset to a t-node. 
In this case one has ${\widetilde {\bf R}}= \mathbf{0}$, thus (\ref{walk_with_resets_def}) contains only the first term.
\begin{Proposition}
The GF, w.r.t.\ $t$, of (\ref{walk_with_resets_def}) is
\beq
\label{Gf_transition matrix_AW}
{\bar {\bf P}}_{\text{AWR}}({\widetilde {\bf W}}, {\widetilde {\bf R}} ;u ) = \left({\mathbf 1} - u {\bar g}(u{\widetilde {\bf W}}) \cdot  {\widetilde {\bf R}}\right)^{-1}
\cdot \frac{{\mathbf 1} - {\bar \psi}(u {\widetilde {\bf W}})}{{\mathbf 1} - u{\widetilde {\bf W}} }   , \hspace{1cm} |u| \leq 1 .
\eeq
\end{Proposition}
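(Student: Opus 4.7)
The plan is to compute the GF in $t$ of the defective propagator (\ref{walk_with_resets_def}) directly term-by-term, using that the inter-reset times $\Delta t_j$ are IID and that $\widetilde{\mathbf{R}}$ is deterministic. For the $n=0$ term $\widetilde{\mathbf{W}}^t\Phi^{(0)}(t)$, the scalar identity $\sum_{t\ge 0}u^t\Phi^{(0)}(t)=(1-\bar{\psi}(u))/(1-u)$ lifts to a matrix argument via the spectral calculus of $\widetilde{\mathbf{W}}$, giving
\begin{equation*}
\sum_{t\ge 0} u^t\,\widetilde{\mathbf{W}}^t\,\Phi^{(0)}(t) = \bigl(\mathbf{1}-\bar{\psi}(u\widetilde{\mathbf{W}})\bigr)\cdot\bigl(\mathbf{1}-u\widetilde{\mathbf{W}}\bigr)^{-1},
\end{equation*}
which already coincides with the rightmost factor of (\ref{Gf_transition matrix_AW}).

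For $n\ge 1$, fix a realization $(\Delta t_1,\ldots,\Delta t_{n+1})$; the indicator $\Theta(J_n,t,J_{n+1})$ restricts $t$ to $\{J_n,\ldots,J_n+\Delta t_{n+1}-1\}$, so
\begin{equation*}
\sum_{t=J_n}^{J_n+\Delta t_{n+1}-1} u^t\,\widetilde{\mathbf{W}}^{t-J_n} = u^{J_n}\,\frac{\mathbf{1}-(u\widetilde{\mathbf{W}})^{\Delta t_{n+1}}}{\mathbf{1}-u\widetilde{\mathbf{W}}}.
\end{equation*}
Next I would write $u^{J_n}=\prod_{k=1}^n u^{\Delta t_k}$ and absorb each $u^{\Delta t_k}$ into its memory factor $\widetilde{\mathbf{W}}^{\Delta t_k-1}\widetilde{\mathbf{R}}$. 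Because $\widetilde{\mathbf{R}}$ is deterministic and the $\Delta t_k$ are independent, the expectation of the ordered product factorizes into $n$ identical copies of $\langle u^{\Delta t}\widetilde{\mathbf{W}}^{\Delta t-1}\rangle\widetilde{\mathbf{R}}$. Setting $\bar{g}(z):=\sum_{r\ge 1}\psi(r)z^{r-1}$ (so that $z\bar{g}(z)=\bar{\psi}(z)$), this expectation equals $u\,\bar{g}(u\widetilde{\mathbf{W}})\widetilde{\mathbf{R}}$, while the independent factor associated with $\Delta t_{n+1}$ supplies $(\mathbf{1}-\bar{\psi}(u\widetilde{\mathbf{W}}))(\mathbf{1}-u\widetilde{\mathbf{W}})^{-1}$ exactly as above. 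Summing $n$ from $0$ to $\infty$ then collapses the whole series into a geometric series, whose formal sum is
\begin{equation*}
\Bigl(\mathbf{1}-u\,\bar{g}(u\widetilde{\mathbf{W}})\cdot\widetilde{\mathbf{R}}\Bigr)^{-1}\cdot\frac{\mathbf{1}-\bar{\psi}(u\widetilde{\mathbf{W}})}{\mathbf{1}-u\widetilde{\mathbf{W}}},
\end{equation*}
i.e.\ the right-hand side of (\ref{Gf_transition matrix_AW}).

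The main step requiring care is justifying convergence of the geometric series and the interchange of the sum over $n$ with the expectation, equivalently showing that $\mathbf{1}-u\bar{g}(u\widetilde{\mathbf{W}})\widetilde{\mathbf{R}}$ is invertible. For $|u|<1$ this is straightforward: Theorem \ref{theorem} gives $\rho(\widetilde{\mathbf{W}})<1$, and since $\widetilde{\mathbf{R}}$ is substochastic, in any submultiplicative matrix norm adapted to $\widetilde{\mathbf{W}}$ one has the bound $\|u\,\bar{g}(u\widetilde{\mathbf{W}})\widetilde{\mathbf{R}}\|\le |u|\sum_{r\ge 1}\psi(r)\|u\widetilde{\mathbf{W}}\|^{r-1}\|\widetilde{\mathbf{R}}\|<1$, which both legitimizes dominated convergence in the average and provides a Neumann series for the inverse. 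The boundary case $|u|=1$ then follows by analytic continuation on the closed disk, since $\rho(\widetilde{\mathbf{W}})<1$ prevents any singularity from reaching the unit circle.
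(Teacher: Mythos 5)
Your derivation is essentially the paper's own proof: the same term-by-term generating-function computation, the same factorization of the expectation of the ordered product into $n$ copies of $\langle u^{\Delta t}\widetilde{\mathbf{W}}^{\Delta t-1}\rangle\cdot\widetilde{\mathbf{R}}=u\bar{g}(u\widetilde{\mathbf{W}})\cdot\widetilde{\mathbf{R}}$ using the IID property, the same treatment of the residual factor coming from $\Delta t_{n+1}$, and the same geometric resummation. The only added content is your discussion of convergence, which the paper omits; for $|u|<1$ your norm bound is fine.

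One caveat: your closing claim that the case $|u|=1$ ``follows by analytic continuation, since $\rho(\widetilde{\mathbf{W}})<1$ prevents any singularity from reaching the unit circle'' is not correct in general. The spectral radius of $\widetilde{\mathbf{W}}$ controls the factor $(\mathbf{1}-u\widetilde{\mathbf{W}})^{-1}$, but not the resolvent $(\mathbf{1}-u\bar{g}(u\widetilde{\mathbf{W}})\cdot\widetilde{\mathbf{R}})^{-1}$, which can become singular exactly at $u=1$: the non-ergodic T-RRP class of Section \ref{5825} gives $\rho(\bar{g}_T(\widetilde{\mathbf{W}})\cdot\mathbf{R})=1$ (Eq.\ (\ref{spectral_radius})), the survival probabilities do not decay, and the MFHT diverges, so neither side of (\ref{Gf_transition matrix_AW}) is finite at $u=1$ there. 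The identity should therefore be asserted on $|u|<1$ (with $u=1$ accessible as a limit $u\to 1-$ when (\ref{power_tilde_W-criterion}) holds), which is also how the paper actually uses it.
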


\begin{proof}
Note that the matrix ${\widetilde {\bf W}}$ is not invertible as $\text{rank}({\widetilde {\bf W}})= \text{rank}(\mathbf{W}) - N_B < N$ where $N_B$ is the number of t-nodes, and that
(\ref{Gf_transition matrix_AW}) 
contains only non-negative powers of ${\widetilde {\bf W}}$. To make this more obvious we introduced the additional GF
\beq
\label{supplement_GF_v}
{\bar g}(v)= \left\langle v^{\Delta t -1} \right\rangle = \sum_{t=1}^{\infty} \psi(t) v^{t-1}  , \hspace{1cm} |v|\leq 1
\eeq
with $\psi(t)=g(t-1)$. The GF, w.r.t.\ $t$, of $\psi(t)v^{t-1}$ is $u{\bar g}(uv)$ and it is present in (\ref{Gf_transition matrix_AW}) as
the matrix GF 
$$ \big\langle {\widetilde {\bf W}}^{\Delta t -1} u^{\Delta t} \big\rangle 
=  \sum_{k=1}^{\infty}u^k \psi(k) \, {\widetilde {\bf W}}^{k-1}   = u\, {\bar g}(u{\widetilde {\bf W}}) . $$
We notice that replacing ${\widetilde {\bf W}}$ and ${\widetilde {\bf R}}$ with $\mathbf{W}$ and $\mathbf{R}$, respectively, 
in (\ref{Gf_transition matrix_AW}), we recover the corresponding GF 
(\ref{Gf_transition matrix}) of the associated walk (see Appendix \ref{AWR_derivations}).
Then, recalling that $J_{n+1}=J_n +\Delta t_{n+1}$,  and using that $\Delta t_j$ are IID copies of $\Delta t$ we arrive at
\begin{align*}
& \ds  \sum_{t=0}^{\infty} u^t \big\langle \Theta(J_n,t,J_{n+1}) ({\widetilde {\bf W}}^{\Delta t_1-1}\cdot {\widetilde {\bf R}})\cdot \ldots \cdot ({\widetilde {\bf W}}^{\Delta t_n-1}\cdot {\widetilde {\bf R}}) \cdot {\widetilde {\bf W}}^{t-J_n}   \big\rangle \\
& = \ds   \big\langle u^{\Delta t_1} {\widetilde {\bf W}}^{\Delta t_1-1} \big\rangle \cdot  {\widetilde {\bf R}}  \cdot \ldots \cdot 
 \big\langle u^{\Delta t_n} {\widetilde {\bf W}}^{\Delta t_n-1}  \big\rangle  \cdot  {\widetilde {\bf R}} \cdot \big\langle \sum_{k=0}^{\Delta t_{n+1}-1}  u^k {\widetilde {\bf W}}^k \big\rangle  \\
& = \ds \left\{  u{\bar g}(u{\widetilde {\bf W}}) \cdot {\widetilde {\bf R}} \right\}^n  \frac{{\mathbf 1} - {\bar \psi}(u {\widetilde {\bf W}})}{{\mathbf 1} - u{\widetilde {\bf W}} }.  
\end{align*}
For $n=0$ we have
$$
\sum_{t=0}^{\infty} u^t {\widetilde {\bf W}}^t  \big\langle \Theta(\Delta t_1-1-t) \big\rangle = \frac{{\mathbf 1} - \langle (u {\widetilde {\bf W}})^{\Delta t_1}\rangle }{{\mathbf 1} - u{\widetilde {\bf W}}}
 = \frac{{\mathbf 1} - {\bar \psi}(u {\widetilde {\bf W}})}{{\mathbf 1} - u{\widetilde {\bf W}} } .
$$
Summing these expressions over $n$ takes us to (\ref{Gf_transition matrix_AW}).
\end{proof}
In addition, (\ref{Gf_transition matrix_AW}) allows us to establish a renewal equation for the SP (see Appendix \ref{AWR_derivations}):
\beq
\label{renewal_AWR}
{\widetilde {\bf P}}_{\text{AWR}}({\widetilde {\bf W}}, {\widetilde {\bf R}} ;t) = {\widetilde {\bf W}}^t \Phi^{(0)}(t)+ 
\sum_{k=1}^t \psi(k) {\widetilde {\bf W}}^{k-1} \cdot 
{\widetilde {\bf R}} \cdot {\widetilde {\bf P}}_{\text{AWR}}({\widetilde {\bf W}}, {\widetilde {\bf R}} ;t-k) .
\eeq
This relation reflects the non-Markovianity of the AWR (unless the RRP is Bernoulli, 
see Appendix \ref{AWR_derivations}, (\ref{Bernoulli_AWR}), (\ref{Bernoulli_reset_AWR_propagator})), 
containing
${\widetilde {\bf P}}_{\text{AWR}}({\widetilde {\bf W}}, {\widetilde {\bf R}} ;t-k)$ which is in fact related to the history of the killing process.
Again, if we replace the couple ${\widetilde {\bf W}}, {\widetilde {\bf R}}$ with $\mathbf{W}, \mathbf{R}$, we retrieve
the renewal equation (\ref{renewal_structure}) of the associated walk.
Inverting (\ref{Gf_transition matrix_AW}) we arrive at the SP with
survival probability (with starting node $i$) 
\beq
\label{AWR_survival}
\Lambda_i(t) =  [{\widetilde {\bf P}}_{\text{AWR}}({\widetilde {\bf W}}, {\widetilde {\bf R}} ;t)]_i  =  
 \sum_{j=1}^N[{\widetilde {\bf P}}_{\text{AWR}}({\widetilde {\bf W}}, {\widetilde {\bf R}} ;t)]_{ij} 
\eeq
and the FHT PDF is then given by (\ref{MFPD}).
Later in the paper, we will focus on the MFHT for a starting node $i$, which is given by
\beq
\label{AW_wit_resets}
\big\langle T_i \big\rangle= 
[{\bar {\bf P}}_{\text{AWR}}({\widetilde {\bf W}}, {\widetilde {\bf R}} ;u)]_i\bigg|_{u=1} .
\eeq
\subsection{Some remarks on ergodicity}
\label{ergo}
An important question is whether a random searcher eventually hits a target. A feature which is closely related to this issue is the ergodicity of the associated walk.
\begin{rmk}\label{remremII}
Recall the notion of ergodicity \cite{Birkhoff1931,Lefebvre2007}. Roughly speaking, an ergodic associated walker looses memory of the starting node and reaches any target ${\cal B}$  with probability one. In a finite network an ergodic walk is also recurrent.
We call an associated walk (\ref{walk_with_resets}) ergodic, if it has a unique stationary propagator (be it a NESS or not) which is a positive matrix, and is independent of the starting nodes, i.e.
\beq
\label{ergodicity}
\mathbf{P}_{ij}(\infty)  = [\mathbf{R} \cdot {\bar f}(\infty,\mathbf{W})]_{ij} = P_j(\infty) > 0, \qquad
i,j=1,\ldots, N
\eeq
where ${\bar f}(t,\mathbf{W})$ is the GF (\ref{Gf_B}) for the matrix argument $\mathbf{W}$ (see (\ref{NESS})).
An associated walk for which the NESS $\mathbf{P}(\infty)$ contains zero entries is non-ergodic. We consider such a class later on.
We show hereafter that if the infinite time limit of the SP (\ref{walk_with_resets_def}), for any target ${\cal B}$, is the zero matrix,
then the corresponding associated walk is ergodic 
and vice versa. Equivalently, 
for any choice of ${\cal B}$ one has the spectral radius
\beq
\label{power_tilde_W-criterion}
\rho({\bar g}({\widetilde {\bf W}})\cdot {\widetilde {\bf R}}) \leq  \max([{\bar g}({\widetilde {\bf W}})\cdot {\widetilde {\bf R}}]_i) < 1.
\eeq
Recall that ${\bar \psi}(u) = u {\bar g}(u)$. See (\ref{supplement_GF_v}) where ${\bar \psi}(u)$ is the GF (\ref{Gf_spi}) of the PDF $\psi(t)$ (\ref{discret-PFD}).
The property (\ref{power_tilde_W-criterion}) includes, but is not limited to, the following cases:
\begin{description}
    \item [(a)] The matrix ${\bar g}(\mathbf{W})$ is positive;
    \item [(b)] The resetting matrix is positive $R_j>0$ for every $j=1,\ldots,N$, i.e. ${\cal L}_R$ comprises the whole network;
    \item [(c)] The matrix $\mathbf{R}\cdot {\bar g}(\mathbf{W})$ is positive.
\end{description}
\end{rmk}
In case {\bf (a)} the associated walker can explore the whole network between the resets for any starting node and can hit any target with strictly positive probability prior to the first reset. In this case ${\bar g}({\widetilde {\bf W}})$ and ${\bar g}({\widetilde {\bf W}})\cdot {\widetilde {\bf R}}$ have spectral radii smaller than one (Theorem \ref{theorem}). 

If condition {\bf (b)} is met, ${\widetilde {\bf R}}$ and ${\bar g}({\widetilde {\bf W}})\cdot {\widetilde {\bf R}}$ have spectral radii smaller than one (again, Theorem \ref{theorem}). The associated walker is able to explore the network by resets and can hit any target with strictly positive probability at the first reset.

In case {\bf (c)} the associated walker can hit any node of the network with strictly positive probability after the first reset.
If {\bf (a)} or {\bf (b)} or both are fulfilled, then {\bf (c)} holds true, but {\bf (c)} does not imply {\bf (a)} or {\bf (b)}.
If {\bf (c)} holds true, both ${\widetilde {\bf R}} \cdot {\bar g}({\widetilde {\bf W}})$ and therefore ${\bar g}({\widetilde {\bf W}})\cdot {\widetilde {\bf R}}$ have spectral radii smaller than one (again, Theorem \ref{theorem}).

To see that {\bf (c)} is sufficient for ergodicity, consider
\beq
\label{ergodicity-proof}
\begin{array}{clr}
\ds \mathbf{P}(\mathbf{W},\mathbf{R};\infty) = \ds  \mathbf{R} \cdot {\bar f}(\infty,\mathbf{W}) & = \ds 
\frac{1}{\langle \Delta t \rangle} \mathbf{R} \cdot \sum_{r=1}^{\infty} \psi(r) \sum_{k=0}^{r-1} \mathbf{W}^k & \\ \\ & = \ds 
\frac{1}{\langle \Delta t \rangle} \mathbf{R}\cdot {\bar g}(\mathbf{W}) + \frac{1}{\langle \Delta t \rangle}\mathbf{R} \cdot \sum_{r=2}^{\infty} \psi(r) \sum_{k=0}^{r-2} \mathbf{W}^k &
\end{array}
\eeq
which fulfills the requirements of (\ref{ergodicity}).
In the case in which $\psi$ and $g$ are fat-tailed, $\langle \Delta t \rangle$ diverges, and {\bf (a)} and {\bf (c)} are satisfied. Then, the associated walk is ergodic and $P_j(\infty)$ coincides with the equilibrium distribution (see (\ref{switch-back}), (\ref{NESS}) together with (\ref{definilimit})). We investigate this class in subsequent Section \ref{Sibuya_ARW}.
From (\ref{power_tilde_W-criterion}), the matrix ${\mathbf 1} - {\bar g}({\widetilde {\bf W}})\cdot {\widetilde {\bf R}}$ in (\ref{Gf_transition matrix_AW}) is invertible. This leads to the following properties as a hallmark of ergodicity of the corresponding associated walk: for any choice of ${\cal B}$
$${\widetilde {\bf P}}_{\text{AWR}}({\widetilde {\bf W}}, {\widetilde {\bf R}}; \infty) = (1-u) {\bar {\bf P}}_{\text{AWR}}({\widetilde {\bf W}}, {\widetilde {\bf R}}; u)\bigg|_{u=1}
 = 0.$$  In addition, the MFHTs
 $$\langle T_i \rangle = [ {\bar {\bf P}}_{\text{AWR}}({\widetilde {\bf W}}, {\widetilde {\bf R}}, u)]_i\bigg|_{u=1} < \infty$$ and all higher moments exist
 for all starting nodes and any target ${\cal B}$. This can be easily seen in (\ref{Gf_transition matrix_AW}) by the existence of derivatives of any order $n$ with respect to $s$ ($u=e^{-s}$), when (\ref{power_tilde_W-criterion}) is fulfilled. From this observation, we infer that
 the SP approaches zero at least geometrically, and so do the survival probabilities.
We point out that (\ref{power_tilde_W-criterion}) represents a sufficient criterion for ergodicity of the associated walk if it holds for any choice of ${\cal B}$.

\subsection{A class of non-ergodic associated walks}
\label{5825}
We consider here the class of associated walks for which the conditions {\bf (a)}, {\bf (b)} and {\bf (c)} (see Section \ref{ergo}) are not fulfilled. In this case the walker cannot explore the entire network.
We focus on the case in which the set of r-nodes ${\cal L}_R$ and that of the t-nodes ${\cal B}$ are disjoint, ${\cal L}_R \cap {\cal B} = \emptyset$, and have a sufficiently ``large'' network distance. Then, there exists a $T \in \mathbb{N}$ 
such that $[\mathbf{W}^k]_{sj} = 0$ for $k \leq T$ for all
$s \in  {\cal L}_R$ and all $j \in {\cal B}$. In other words, the shortest path connecting r- and t-nodes is longer than $T$ steps. Further, we assume that the time interval between consecutive resets $\Delta t_T$ is such that $ \max (\Delta t_T) = T $.
It has a discrete PDF $\psi_T(t)$ 
supported on $\{1,\ldots, T \}$ with ${\bar \psi}_T(u)= u{\bar g}_T(u) = \sum_{t=1}^T \psi_T(t) u^t$. Observe that
${\bar \psi}_T(1)= \sum_{t=1}^T \psi_T(t) = 1$. We refer this RRP to as T-RRP. 
The mean waiting time between consecutive resets is $\langle \Delta t_T \rangle =\sum_{r=1}^T r\psi_T(r) \leq T$ (the equality is attained in the deterministic $T$-periodic case -- see the end of this section). 
In this construction, the Markovian walker never performs more than $T-1$ successive steps drawn from 
$\mathbf{W}$ between consecutive resets, which limits the capacity to explore the network.
The associated walk has a NESS of the form (see (\ref{NESS}))
\beq
\label{NESS_T}
\mathbf{P}(\mathbf{W},\mathbf{R};\infty) =  \frac{1}{\langle \Delta t_T\rangle } \mathbf{R} \cdot \sum_{k=1}^T\psi_T(k) \sum_{r=0}^{k-1} \mathbf{W}^r
\eeq
with $\mathbf{P}(\mathbf{W},\mathbf{R};t) = \mathbf{P}(\mathbf{W},\mathbf{R};\infty)$ for $t\geq T$.
This matrix has identical rows and some zero entries. 
The occupation probabilities of all t-nodes are null for $t\geq T$, as (\ref{NESS_T}) contains only matrix powers of orders $r<T$ with $[\mathbf{W}^r]_{sj}=0$, thus $P_{sj}(\mathbf{W},\mathbf{R};\infty) =0$ 
for $j\in {\cal B}$, $s \in {\cal L}_R$. 
The smaller $T$ is chosen, the more localized the walker remains close to r-nodes after the first reset.
In turn, for starting nodes which are sufficiently ``close'' to a t-node, one expects that the walker may hit the target prior to the first reset with strictly positive probability. 
Let us check this assertion. Notably, since ${\cal L}_R \cap {\cal B} =\emptyset$, we have
${\widetilde {\bf R}} = \mathbf{R}$ maintaining row-stochasticity, and we will make use of
\beq
\label{make_use_of}
 {\widetilde {\bf R}}\cdot  {\widetilde {\bf W}}^k = \mathbf{R}\cdot \mathbf{W}^k  , \hspace{1cm} k \leq T .
\eeq
The rows labeling r-nodes of ${\widetilde {\bf W}}^k$ and $\mathbf{W}^k$ ($k \leq T$) coincide.
Therefore, $\mathbf{R} \cdot {\bar g}_T({\widetilde {\bf W}}) = \mathbf{R} \cdot {\bar g}_T(\mathbf{W}) $ is row-stochastic.
In particular, one has
\beq
\label{non-ergodic}
\begin{array}{clr}
{\widetilde {\bf R}} \cdot {\bar g}_T({\widetilde {\bf W}}) \cdot {\widetilde {\bf R}} 
= & \mathbf{R} \cdot {\bar g}_T(\mathbf{W}) \cdot \mathbf{R}  = \mathbf{R} & \\ \\
(\mathbf{R} \cdot  {\bar g}_T({\widetilde {\bf W}}))^n = &  \mathbf{R} \cdot  {\bar g}_T(\mathbf{W}) & \\ 
  & & \hspace{1cm} n \in \mathbb{N}, \\ 
( {\bar g}_T({\widetilde {\bf W}}) \cdot \mathbf{R} )^n = &  {\bar g}_T({\widetilde {\bf W}}) \cdot \mathbf{R}  & 
\end{array}
\eeq
remaining constant for $n\to \infty$. Since these relations are independent of $n$, we infer that 
the matrices $\mathbf{R}$,  $\mathbf{R} \cdot  {\bar g}_T({\widetilde {\bf W}})$ and ${\bar g}_T({\widetilde {\bf W}}) \cdot \mathbf{R}$ have only eigenvalues $0$ and $1$. In particular, $\text{rank}[\mathbf{R} \cdot  {\bar g}_T(\mathbf{W}]= \text{rank}[\mathbf{R}] = 1$, i.e.\ the latter have $N-1$ eigenvalues $0$ and a unique eigenvalue $1$.
As a hallmark of non-ergodicity, one has for our choice of the target, the spectral radii
\beq
\label{spectral_radius}
\rho({\bar g}_T({\widetilde {\bf W}}) \cdot \mathbf{R}) = \rho( \mathbf{R} \cdot {\bar g}_T({\widetilde {\bf W}}) )  = 1 .
\eeq
For our aims, it is not necessary to explore further spectral details. 
More generally speaking, an associated walk under T-RRP resetting is non-ergodic 
if it exists a choice of ${\cal B}$  for which (\ref{spectral_radius}) holds true.
We consider now the first hitting statistics and evaluate (\ref{Gf_transition matrix_AW}) as
\beq
\label{GF_AWR_short_T}
\begin{array}{clr}
\ds {\bar {\bf P}}_{\text{AWR}}({\widetilde {\bf W}}, \mathbf{R}; u) & = \ds \left( \mathbf{1} + \frac{ u{\bar g}_T(u {\widetilde {\bf W}})} {1-{\bar \psi}_T(u)}  \cdot \mathbf{R}\right) \cdot 
\frac{\mathbf{1}-{\bar \psi}_T(u{\widetilde {\bf W}})}{\mathbf{1} -u{\widetilde {\bf W}}} & \\ \\
 & =  \ds \left( \mathbf{1} + \frac{ u{\bar g}_T(u {\widetilde {\bf W}})} {1-{\bar \psi}_T(u)}  \cdot \mathbf{R}\right) \cdot \sum_{k=1}^T\psi_T(k) \sum_{r=0}^{k-1} u^r {\widetilde {\bf W}}^r   &
 \end{array}  \hspace{1cm} |u| < 1 .
\eeq
In the second line we used the T-RRP features (\ref{make_use_of}), (\ref{non-ergodic}) taking us to 
$$ [u{\bar g}_T(u {\widetilde {\bf W}}) \cdot \mathbf{R}]^n = [{\bar \psi}_T(u)]^{n-1}  u {\bar g}_T(u {\widetilde {\bf W}}) \cdot \mathbf {R} .$$
The large time limit of the survival probabilities yields
\beq
\label{survival_non-ergodic}
\Lambda_i(\infty) = (1-u) \sum_{j=1}^N[{\bar {\bf P}}_{\text{AWR}}({\widetilde {\bf W}}, \mathbf{R}; u)]_{ij}\bigg|_{u\to 1-}= 
\sum_{j=1}^N [{\bar g}_T({\widetilde {\bf W}})]_{ij} = \left\{ \begin{array}{cl}   1 , 
& i \in {\cal L}_R \\ \\
 c \leq 1 , & i \notin {\cal L}_R \end{array} \right. 
\eeq
where $\Lambda_i(t)=\Lambda_i(\infty)$ remain constant and strictly positive for $t \geq T$ and for all starting nodes, as no more hits to t-nodes may happen after and from the first reset on.
An AWR searcher starting from an r-node almost surely stays alive forever.
In turn, a walker starting from a node $i \notin {\cal L}_R$, located less than $T$ steps away from ${\cal B}$,
hits the target prior to the first reset with positive probability $1-\Lambda_i(\infty) = \sum_{t=1}^{\infty} \chi_i(t) < 1$. Note that for these starting nodes $\chi_i(t)$ is defective.
On the other hand, we have (see (\ref{GF_AWR_short_T}))
\beq
\label{T-RRP-MFHT}
\langle T_i \rangle = \sum_{j=1}^N[{\bar {\bf P}}_{\text{AWR}}({\widetilde {\bf W}}, \mathbf{R}; u)]_{ij}\bigg|_{u=1} = 
\infty
\eeq
for all starting nodes $i$, even for those less than $T$ steps away from the target. Our interpretation is, for all starting nodes,
the existence of escape paths avoiding the target prior to the first reset and therefore never hitting it.
Expression
(\ref{survival_non-ergodic}) can also be obtained directly from (\ref{renewal_AWR}) by setting  there $\Phi^{(0)}(t)=0$ holding for $t\geq T$
and using that
$\mathbf{R}\cdot {\widetilde {\bf P}}_{\text{AWR}}(\mathbf{R},{\widetilde {\bf W}},t-k) = \mathbf{R}\cdot \mathbf{P}(\mathbf{W},\mathbf{R},t-k)$ remains forever row-stochastic.
One has $[{\widetilde {\bf W}}^{k-1} \cdot\mathbf{R}\cdot \mathbf{P}_{\text{AWR}}(\mathbf{R},{\widetilde {\bf W}},t)]_i = [{\widetilde {\bf W}}^{k-1}]_i$ for all $i=1,\ldots, N$ and $k\leq T$ reflecting the feature that the complete history of the killing events takes place prior to the first reset and is ``recorded'' in $[{\bar g}_T({\widetilde {\bf W}})]_i$. 
To visualize such a non-ergodic walk, we present an animation which can be viewed by clicking online on the
\href{https://drive.google.com/file/d/1QR5iqNYazWPA52pL_D9lRhcOm66jD2yp/view}{\textcolor{violet}{\it simulation video}}.
In this simulation we have chosen $T=50$ and a uniform distribution $\psi_T(t) = \frac{1}{T}$, $t \in \{1,\ldots,T\}$ for the inter-reset times. We consider this walk on a realization of a WS graph with $300$ nodes and the other parameters identical to the simulations of Section \ref{first_passage}. The r-node (with label $100$) coincides with the departure node and the t-node (with label $200$) is more than $50$ steps away from the departure node and hence can never be reached. One can see that the walker is confined in a region of $50$ steps away from the r-node. The deterministic case of $T$-periodic resets with $T=50$ (Section \ref{det_lim}) is visualized by clicking online on this \href{https://drive.google.com/file/d/1Pa6DZkW7G5MGGBe6O_eWTsJLD2E3bZI5/view}{\textcolor{violet}{\it simulation video}} for the same network and identical parameters. The runtime is $300$ in both cases and less than $40$ nodes are visited. Note that the smaller $T$ and 
$\langle \Delta t_T\rangle $ are, the more confined the walker is to a region close to the starting point.
\subsection{Deterministic limit}
\label{det_lim}
The T-RRP class also contains the $T$-periodic deterministic limit, which is simply retrieved for $\psi_T(t)=\delta_{t,T}$. The  $T$-periodic associated walk propagator then reads
\beq
\label{periodic}
\mathbf{P}(\mathbf{W}, \mathbf{R}; t) = \Theta(T-1-t) \mathbf{W}^t + \Theta(t-T) \mathbf{R} \cdot \mathbf{W}^{\, t \hspace{0.1cm} {\rm mod} \hspace{0.1cm} T}  , \hspace{1cm} t \in \mathbf{N}_0
\eeq
and hence does not take a steady state for $t\to \infty$. 
The large time limit (\ref{NESS_T}), in this case, is the time-average over one period
$T^{-1}\mathbf{R}\cdot \sum_{k=0}^{T-1} \mathbf{W}^k$.
A special periodic case is again $T=1$ where in each time instant a reset occurs
(that is ${\cal N}(t)=t$ almost surely) with ${\bar \psi}(u)=u$ (${\bar g}(u)=1$), which we already considered previously.
In that case
(\ref{Gf_transition matrix_AW}) boils down to
\beq
\label{Non_ergodic}
{\bar {\bf P}}_{\text{AWR}}({\widetilde {\bf W}},\mathbf{R} ; u ) = \frac{\mathbf{1}}{\mathbf{1}-u\mathbf{R}}  = 
\frac{1}{1-u} \mathbf{R},
\eeq
with $\Lambda_i(t)=1$ for all starting nodes with
$\mathbf{P}(\mathbf{W}, \mathbf{R}; t) = \mathbf{R}$ ($t\geq 1$) where the associated walker forever navigates in ${\cal L}_R$ and never hits the target.
\section{Sibuya RRP}
\label{Sibuya_ARW}
A rather interesting case in the class of non-Markovian RRPs is that in which the time interval between consecutive resets is Sibuya distributed \cite{Sibuya1979}. The Sibuya distribution is fat-tailed with infinite mean. In this case no NESS exists, and the associated walk is ergodic
(see also Appendix \ref{standard_Sib}).
Recall that the probability that no reset occurs up to and including time $t$ is 
\cite{GSD-Squirrel-walk2023}
\beq
\label{persistence_prob}
\Phi_{\alpha}^{(0}(t) = \frac{\Gamma(t+1-\alpha)}{\Gamma(1-\alpha)\Gamma(t+1)} , \hspace{0.5cm} t \in \mathbb{N}_0, \hspace{0.5cm} \alpha \in (0,1), \hspace{1cm} \Phi_{\alpha}^{(0}(t) \sim \frac{t^{-\alpha}}{\Gamma(1-\alpha)} ,\hspace{0.5cm}  t \to \infty,
\eeq
with $\Phi_{\alpha}^{(0}(0)=1$. The symbol $\sim$ denotes asymptotic equality. The Sibuya resetting rate is explicitly obtained as (see (\ref{resettying_sib}))
\beq
\label{resettying_sib_rate}
 {\cal R}_{\alpha}(t) = -\delta_{t0} + \frac{\Gamma(\alpha+t)}{\Gamma(\alpha)\Gamma(t+1)}  , \hspace{1.5cm} {\cal R}_{\alpha}(t)\sim \frac{t^{\alpha-1}}{\Gamma(\alpha)}  \to 0 ,\hspace{1cm}  t \to \infty,
\eeq
with ${\cal R}_{\alpha}(0)=0$ and $0\leq {\cal R}_{\alpha}(t) \leq 1$. 
We plot in Fig.\ \ref{Sib_resetting_rate} ${\cal R}_{\alpha}(t)$ as a function of $\alpha$ for some range of $t$. For fixed $t$ the resetting rate ${\cal R}_{\alpha}(t)$ is monotonically increasing approaching $1$ from the left for $\alpha \to 1-$,  corresponding to the limit ${\cal N}_{1-}(t) \to t$ of the Sibuya RRP counting process ${\cal N}_{\alpha}(t)$. On the other hand, for $\alpha \to 0+$ we have 
${\cal R}_{\alpha}(t) \to 0+$ (as $\Gamma(\alpha) \to \infty$).
\begin{figure}[t!]
\centerline{
\includegraphics[width=0.75\textwidth]{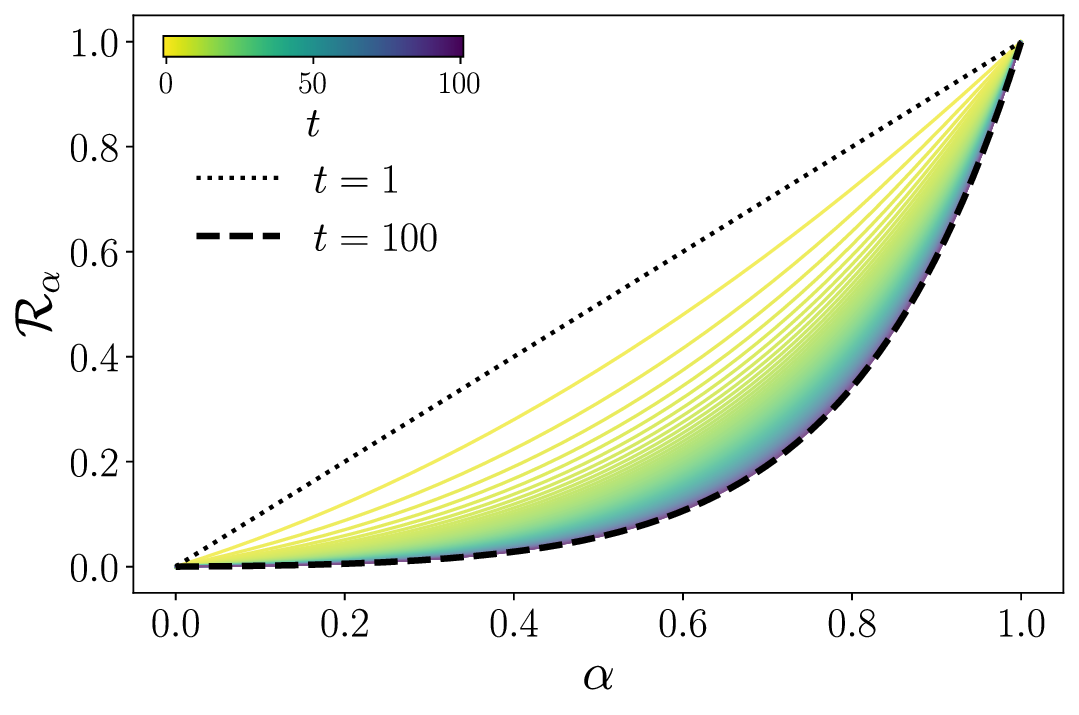}
}
\vspace{-3mm}
\caption{Sibuya resetting rate ${\cal R}_{\alpha}$ of Eq.\ (\ref{resettying_sib_rate}) as a function of $\alpha$ and different values of $t$. The numerical results for $1<t<100$ are coded with the colorbar, whereas the two extremal cases $t=1$ and $t=100$ discontinuous lines are depicted.}
\label{Sib_resetting_rate}
\end{figure}
\noindent
The propagator of the associated Sibuya walk is then explicitly given by (\ref{propagator_time}) together with  (\ref{persistence_prob}) and (\ref{GF_V}). Instructive here is only its large-time asymptotics which, together with (\ref{switch-back}) and (\ref{definilimit}), is obtained as
\beq
\label{large_time_Sibuya}
\begin{array}{clr}
\ds \mathbf{P}^{(\alpha)}(t) & \ds \sim \mathbf{R} \cdot {\bar f}_{\alpha}(t,\mathbf{W}) & \\  \\
P_{ij}^{(\alpha)}(t) & \ds \sim \frac{K_j}{\sum_{r=1}^NK_r} + \frac{t^{\alpha-1}}{\Gamma(\alpha)} \sum_{m=2}^N (1-\lambda_m)^{\alpha-1}\sum_{k=1}^N R_k \langle k| \phi_m\rangle\langle{\bar \phi}_m|j\rangle  &
\end{array} \hspace{0.5cm}  t \to \infty, 
\eeq
and does not depend on the starting nodes. The approach toward the equilibrium is similar to that of the
confined CTRW on the line, see \cite{MetzlerKlafter2000}.
The only exception occurs for
$\mathbf{R} = |\phi_1\rangle\langle{\bar \phi}_1|$ where the decay to the stationary distribution is a geometrically tempered power-law (see 
(\ref{switch-back})).
As mentioned, there is no NESS here and the stationary equilibrium distribution is approached slowly by a power-law as a landmark of the non-Markovianity of the Sibuya counting process. 
In the following, we restrict ourselves to investigate the first hitting statistics. To that end, we evaluate the GF of Sibuya SP (\ref{Gf_transition matrix_AW}),
\beq
\label{Gf_transition matrix_AW_Sibuya}
{\bar {\bf P}}_{\text{AWR}}^{(\alpha)}({\widetilde {\bf W}}, {\widetilde {\bf R}} ;u ) = \left(\mathbf{1} - {\widetilde {\bf W}}_{\epsilon}^{-1} \cdot \{
\mathbf{1} - [\mathbf{1}- u {\widetilde {\bf W}}]^{\alpha} \} \cdot {\widetilde {\bf R}} \right)^{-1} 
\cdot (\mathbf{1}- u {\widetilde {\bf W}})^{\alpha -1}  , \hspace{1cm} |u| \leq 1,
\eeq
where, from a numerical point of view, it is convenient to use the pseudo-inverse of ${\widetilde {\bf W}}$, which is defined by
\beq
\label{pseudo_inverse}
{\widetilde {\bf W}}_{\epsilon}^{-1} =
\Re \, \frac{\mathbf{1}}{{\widetilde {\bf W}} + {\rm i}\,\epsilon \mathbf{1} } = 
\frac{{\widetilde {\bf W}}}{{\widetilde {\bf W}}^2 + \epsilon^2\,\mathbf{1} } , \hspace{1cm} \epsilon \to 0.
\eeq
where  one has that
$ {\widetilde {\bf W}}_{\epsilon}^{-1} {\bar \psi}(u{\widetilde {\bf W}}) \to u{\bar g}(u{\widetilde {\bf W}}))$
for $\epsilon \to 0+$.

There are two noteworthy Markovian limits. For $\alpha \to 0+$, where  no resets occur almost surely, one recovers the Markovian AW with ${\bar {\bf P}}_{\text{AWR}}^{(0+)}({\widetilde {\bf W}}, {\widetilde {\bf R}} ;u) =[1-u{\widetilde {\bf W}}]^{-1}$ and ${\widetilde {\bf P}}_{\text{AWR}}^{(0+)}({\widetilde {\bf W}}, {\widetilde {\bf R}} ;t)= {\widetilde {\bf W}}^t$.
The second limit $\alpha \to 1-$ is also Markovian and corresponds to the trivial counting renewal process discussed previously where at each time instant a reset occurs. Formula (\ref{Gf_transition matrix_AW_Sibuya}) then boils down to
${\bar {\bf P}}_{\text{AWR}}^{(1-)}(u) = 
(\mathbf{1}-  u {\widetilde {\bf R}})^{-1}$ with ${\bf P}_{\text{AWR}}^{(1-)}(t) = {\widetilde {\bf R}}^t$. This limit is non-ergodic unless 
${\cal L}_R$ comprises the entire graph.
From (\ref{AW_wit_resets}), we evaluate the Sibuya MFHT 
$\big\langle T_i(\alpha) \big\rangle  = [{\bar {\bf P}}_{\text{AWR}}^{(\alpha)}({\widetilde {\bf W}}, {\widetilde {\bf R}} ;1 )]_i$  and take the average over all starting nodes $i$ to define the global Sibuya MFHT 
\beq
\label{global_MFHT}
\big\langle T_{\alpha} \big\rangle = \frac{1}{N}\sum_{i=1}^N \big\langle T_i(\alpha) \big\rangle .
\eeq
We show in Figs.\ \ref{Sib_MFHT}(a,c) the global MFHT (\ref{global_MFHT}) as a function of $\alpha$ for the 
above considered WS and BA graphs and some proportions of uniformly distributed t-nodes, determined (as previously) by independent Bernoulli trials. 
In the same settings, plots (b,d) in Fig. \ref{Sib_MFHT} show the MFHTs with starting node $i=100$.
\begin{figure}[t!]
\centerline{
\includegraphics[width=1.0\textwidth]{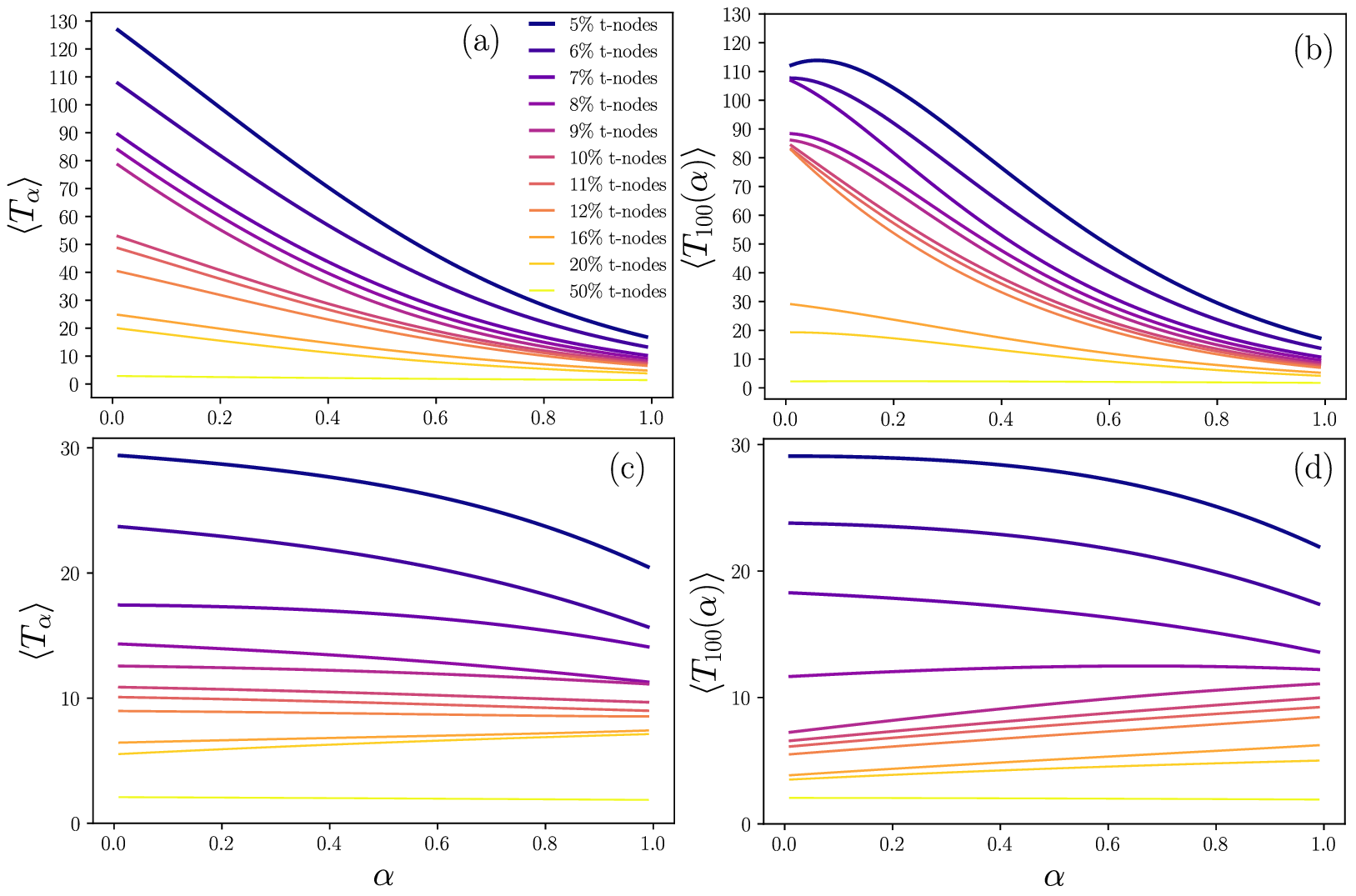}
}
\caption{Global MFHT and MFHT as functions of the Sibuya index $\alpha$ with uniform
resetting probabilities to any node for some proportions of uniformly distributed t-nodes in the Watts-Strogatz and Barabási–Albert networks with size $N=500$. Frame {\bf (a)}: $\langle T_{\alpha} \rangle$ in Eq.\ (\ref{global_MFHT}) as a function of $\alpha$ for the Watts-Strogatz network. Frame {\bf (b)}: $\langle T_{100}(\alpha)\rangle$ as a function of $\alpha$ for the same network as in (a) with  starting node $i=100$. Frames {\bf (c)} and {\bf (d)} present the same analysis for the Barabási–Albert network. The parameters of the considered graphs are identical as in Figs.\ \ref{BA_network} and \ref{WS_network}.
In all frames, we use the colors for the curves in panel {\bf (a)} to code the proportion of t-nodes.}
\label{Sib_MFHT}
\end{figure}
In all frames of this figure, the resets are performed with uniform probability $R_j=1/N$ to any node of the network. 
In the (large world) WS graph, $\langle T_{\alpha} \rangle$ exhibits a pronounced decay as $\alpha$ increases, in particular for small proportions of t-nodes. A minimum is reached for $\alpha \to 1-$. We interpret this decrease as follows: frequent resetting to any node brings the walker, on average, quickly close to a t-node.  
For fixed $\alpha$, the global 
MFHT decays monotonically as the proportion of t-nodes increases. One observes that for high t-node proportions ($\approx 50\%$)  the global MFHT and the MFHT for a specific starting node ($i=100$)
take the minimal constant value of about one, independent of $\alpha$ in both graph types. This makes sense since in such a configuration each node is likely to be either a t-node or a neighbor of a t-node.
In Fig. \ref{Sib_MFHT}(b) one can see that the MFHT in the WS graph
for starting node $i=100$ has a similar behavior as the global MFHT of panel (a), in particular for large $\alpha$. In this case, the memory of the starting node is weakened by frequent resets (as soons as the Sibuya resetting rate increases monotonically with $\alpha$, see Fig.\ \ref{Sib_resetting_rate}). For small t-node proportions ($\approx 5\%$) we observe that the MFHT first slightly increases (as effect of the particular location of the starting node) and then falls off with very similar values as the global MFHT. A comparison of the frames (a) and (b) shows that the global MFHT captures well the essential features in the large world WS graph.

In the small world BA network considered in the frame \ref{Sib_MFHT}(c), $\langle T_{\alpha} \rangle$ decays slightly with respect to $\alpha$,
where the decrease becomes less pronounced for larger proportions of t-nodes ($>10\%$) (turning into a slight increase for proportions lager than $16\%$), maintaining approximately the values of the limit of the Markovian walk without resets $\alpha \to 0+$. 
A comparison of frames (c) with (d) shows that the behavior of the MFHT $\langle T_{100}(\alpha) \rangle$ is rather similar to the global MFHT. For larger t-node proportions ($>10\%$) $\langle T_{100}(\alpha) \rangle$ slightly increases with respect to $\alpha$, maintaining values close to the Markovian limit $\alpha \to 0+$ without resetting as well.
We explain this very weak dependence on $\alpha$ by the small world feature of the BA network (short average distances between nodes). 
Our interpretation is that, relocating the walker in a small world structure with a large t-node proportion, in the average, does not change the
distance to a given target significantly. 
On the other hand, for sparse t-node distributions, frequent resetting to any node of the network is still advantageous in reducing the first hitting times.

In the considered WS graph, the absolute values of the global MFHT for small $\alpha$ (long waiting times between resets) and in the Markovian limit $\alpha=0$ are much larger compared to those in the BA graph. This clearly reflects the fact that the WS graph has larger average distances between nodes as the BA graph (see again the representations of these graphs in Figs.\ \ref{BA_network}(a) and \ref{WS_network}(a), respectively). Therefore, it makes sense that a random searcher 
needs in the average more time to hit the same target in the WS graph than in the BA graph. Frequent resets to any node clearly compensate the disadvantage of longer distances. 
These results together with those obtained for Bernoulli resets give a consistent picture,
telling us that the presence of resetting reduces mean first hitting times most significantly in large world structures with sparse targets. To enhance this effect, the relocation nodes should be widespread over the network.
\section{Conclusions and further directions}
\label{Conclusions}
We developed a discrete time model for a Markovian random walker under 
renewal resetting moving in a finite undirected, connected, ergodic, network. 
Our approach considers Markovian and non-Markovian RRPs.
From the concept of backward recurrence time, we derived an exact formula for the propagator of the associated walk (see (\ref{propagator_time})). 
We applied these results to Markovian resetting and discussed some related first passage quantities.
We observe a non trivial behavior of the MFPT both as a function of the resetting rate or the proportion of r-nodes.
We showed that the presence of resetting reduces the MFPT most efficiently in the considered large world WS graph, whereas the impact of resetting is less effective in small world networks such as the considered BA
graph. As expected, the effect of resetting disappears in completely connected structures (see Fig.\ \ref{Kemeny_BA_WS}).

We focused on  different resetting scenarios.
We developed a framework based on killing the walker at the time of reaching target nodes to study the first hitting statistics in the case of non-Markovian RRPs. We derived an exact formula (see (\ref{Gf_transition matrix_AW})) for the GF of the SP (\ref{walk_with_resets_def}) from which information on first hitting statistics for the associated walk can be obtained (see (\ref{walk_with_resets}), (\ref{propagator_time})).
These exact results are useful to analyze search strategies for a wide range of random walks under non-Markovian RRPs in complex environments, and various relocation scenarios. In particular, the SP allowed us to formulate some sufficient conditions for which the associated walk is ergodic (see Remark \ref{remremII}). 
In order to show that the associated walks are not always ergodic, we introduced a class of RRPs for which 
the walker after the first reset is trapped in a certain region, preventing the complete exploration of the network.

As a follow-up project, a more profound analysis of ergodic properties of random walks under resetting in various network types 
should be carried out. In this context, infinite graph scaling limits appear worthy of consideration.
In the present paper we considered undirected networks. However, this approach can readily be adapted to 
directed graphs in order to investigate first hitting features of biased motions under resetting or of motions with modified navigation rules \cite{Fronczak2009}.
The results presented in this paper have a large potential of generalization with a wide range of 
possible interdisciplinary applications
\cite{Evans-etal2020}. 
On the other hand, in the framework of the present approach a further investigation of the MFHT for non-Markovian RRPs
in case of fixed t-node sets and variable r-node populations in various network types is of interest.

A further interesting direction is to endow the walker with a memory of previously visited nodes to discourage or encourage visits of already explored regions \cite{BoyerSolisSalas2014,MeyerRieger2021}.
Moreover, the entries of the resetting matrix $R_{ij}'$ may depend on the walker's position at the time of the reset. This problem has a connection with partial resetting \cite{Tal-Friedman-etal2022}. 
A further topic calling for thorough investigation is transient reset. In this situation, the time intervals between consecutive resets are defective random variables, and the governing RRP comprises
a finite number of resetting events almost surely \cite{Dono_Mi_Po_Ria2024}. In such a model, the number of excursions is ``naturally'' limited
by the (energy) cost of the search process \cite{PalSandev2023}. 
Finally, one may consider also telegraph processes \cite{SRW2023,Sandev_Iomin_PRE2024} or some kinds of
deterministic motions under resetting.

\section*{Acknowledgements}
We thank the two anonymous reviewers for their valuable comments, which helped us to improve the presentation and interpretation of some results.
The authors G.D.\ and F.P.\ thank the GNAMPA
group of INdAM and acknowledge financial support under the MIUR-PRIN 2022 project ``Non-Markovian dynamics and non-local equations'', no. 202277N5H9
- CUP: D53D23005670006. 

%
%


\appendix
%
%
\setcounter{equation}{0}
\renewcommand{\theequation}{A\arabic{equation}}
\section*{Appendix}
\section{Proof of formulas (\ref{represntation_Tij}) and (\ref{mean_relax})}
\label{appendix_num_eval}
Let ${\rm Diag}(\mathbf{M}) = [\delta_{ij} M_{jj}] $ be the matrix containing the diagonal elements of $\mathbf{M} =[M_{ij}]$
and consider $\mathbf{H}$ the all-ones matrix, i.e.\ $H_{ij}=1$ for all $i,j$. Then, the MFPT matrix (\ref{represntation_Tij}) is given by
\beq
\label{MFPT_computation}
\big\langle \mathbf{T}(p;\mathbf{R}) \big\rangle = 
\left( \mathbf{1} + \mathbf{H} \cdot {\rm Diag}(\mathbf{S}) -\mathbf{S}\right) \cdot 
\left({\rm Diag} \left[ p\cdot\mathbf{R}\cdot \left(\mathbf{1}-q\mathbf{W}\right)^{-1} \right] \right)^{-1}
\eeq
where $\mathbf{S}$ is defined in (\ref{S-matrix}). The condition $q=1-p<1$ is required for $\mathbf{1}-q\mathbf{W}$ to be invertible.

Now we derive (\ref{mean_relax}).
From (\ref{Bernoulli_GF_prop}) and (\ref{NESS_Bernoulli}) one has straightforwardly
\beq
\label{Pinftyuone}
{\bar {\bf P}}(u) -\frac{\mathbf{P}(\infty)}{1-u} = \left(\mathbf{1}-q\mathbf{W}-p\mathbf{R}\right) \cdot 
\frac{\mathbf{1}}{(\mathbf{1}-q\mathbf{W}) \cdot (\mathbf{1}-qu\mathbf{W})}
\eeq
Setting $u=1$ 
yields (\ref{mean_relax}) and (\ref{Kemeny_res}).
\section{First hitting path statistics}
\label{fist_encouter_paths}
Here we aim to motivate our approach of killing the walker once t-nodes are reached and to
establish the connection of the first passage probability $F_{ib}(t)$ in Eq.\ 
(\ref{first-passage_proba}) with the FHT PDF of (\ref{MFPD}) for Markovian walks
with a single t-node $b$. An extension to multiple t-nodes is briefly discussed at the end of this section.
Consider a sample path of $t$ steps, $t\geq 1$, starting from a node $i$, passing through the nodes
$r_1 \to r_2 \to \ldots \to r_t$ and introduce an indicator function (IF), telling us
whether the target node $b$ is hit or not.
This survival IF has then the form
\beq
\label{first_hit_r}
h(r_1, \ldots, r_t; b) = (1-\delta_{r_1,b})\ldots (1-\delta_{r_t,b}), \hspace{1cm} t= 1,2,\ldots , \hspace{0.5cm} r_j = 1, \ldots, N.
\eeq
For $t=0$ we define $h = 1$. The function $h(r_1, \ldots, r_t; b)=1$ while the walker is alive (i.e.\ it has not hit $b$ yet) and $h(r_1, \ldots, r_t; b)=0$ otherwise.
In addition, the first hitting time IF reads
\beq
\label{IF_first}
f(r_1, \ldots, r_t; b) = h(r_1, \ldots, r_{t-1}; b) \delta_{r_t,b} , \hspace{1cm} t=1,2,\ldots , \hspace{0.5cm} r_j = 1, \ldots, N
\eeq
with $f(r_1, \ldots, r_t; b) = 1$ at the time instant when $b$ is hit for the first time and zero otherwise. It makes sense to define $f = 0$ for $t=0$.
Observe that for $t\geq 1$ one has
\beq
\label{important_feature}
h(r_1, \ldots, r_{t-1}; b) - f(r_1, \ldots, r_t; b) = h(r_1, \ldots, r_{t-1}; b) (1- \delta_{r_t,b}) = h(r_1, \ldots, r_t; b) 
\eeq
and
\beq
\label{unity_feature}
1 =  (1-\delta_{r_1,b} +\delta_{r_1,b}) \ldots  (1-\delta_{r_t,b} +\delta_{r_t,b}) =h(r_1, \ldots, r_{t}; b)  + \sum_{k=1}^{t} f(r_1, \ldots, r_k; b) 
\eeq
where $f(r_1, \ldots, r_k; b)$ considers the sample paths hitting $b$ at step $k$ for the first time, no matter whether at the remaining $t-k$ steps $b$ is hit or not.
Then, we consider the sample path average to hit the node $k$
in one step, given that the starting node is $i$, 
\beq
\label{cond_i_averge}
\langle \delta_{r,k} \rangle = \sum_{r=1}^N W_{ir} \delta_{r,k} = W_{ik} 
\eeq
which clearly recovers the transition matrix element.
The probability that the Markovian walker, starting from node $i$, chooses a specific sample path $i \to r_1 \to \ldots \to r_t$ is given by
$\Pi(i \to r_1 \to \ldots r_t)= W_{ir_1}W_{r_1,r_2} \ldots W_{r_{t-1},r_t}$.
We now average the survival IF over all possible paths of $t$ steps while keeping the starting node $i$ fixed to arrive at
\beq
\label{g_avergae}
\begin{array}{clr}
\ds \big\langle h(r_1, \ldots, r_t; b) \big\rangle & =  \ds \sum_{r_1=1}^N \ldots \sum_{r_t=1}^N \Pi(i \to r_1 \to \ldots, \to r_t) h(r_1,\ldots,r_t; b) & \\ \\ & = \ds \sum_{r_1=1}^N \ldots \sum_{r_t=1}^N  W_{ir_1}W_{r_1,r_2} \ldots W_{r_{t-1},r_t} h(r_1,\ldots,r_t; b) & \\ \\ & = \ds
\sum_{r_1=1}^N \ldots \sum_{r_t=1}^N {\widetilde W}_{ir_1} \ldots {\widetilde W}_{r_{t-1},r_t} = \sum_{r=1}^N
[{\widetilde {\bf W}}^t]_{ir}
= \Lambda_i(t) & 
\end{array}
\eeq
retrieving the survival probability (\ref{survival_monkey}).
Performing this average for (\ref{IF_first}) yields the probability to hit $b$ at step $t$ for the first time, or, in other words, the FHT PDF defined in (\ref{MFPD}). We will show below that the latter coincides with the first passage probability $F_{ib}(t)$ of (\ref{first-passage_proba}), as
\beq
\label{first_hitting_probability}
\big\langle f(r_1, \ldots, r_t; b) \big\rangle = \sum_{r_1=1}^N \ldots \sum_{r_{t-1}=1}^N  {\widetilde W}_{ir_1} \ldots {\widetilde W}_{r_{t-2},r_{t-1}} W_{r_{t-1},b} = [{\widetilde {\bf W}}^{t-1}\cdot \mathbf{W}]_{ib} = \chi_i(t) = F_{ib}(t)
\eeq
where we considered $\chi_i(1)= W_{ib}$ and $\chi_i(0)=0$.
This expression has an interesting interpretation: from $[{\widetilde {\bf W}}^{t-1}\cdot \mathbf{W}]_{ib} =
\sum_{k=1}^N[{\widetilde {\bf W}}^{t-1}]_{ik}W_{kb}$, the matrix element $[{\widetilde {\bf W}}^{t-1}]_{ik}$ denotes the probability that the walker arrives at time instant $t-1$ alive at node $k$, and then hops with probability $W_{kb}$ from $k$ to the target node $b$ where it is killed at time $t$.
It is then straightforward to obtain 
$$
\Lambda_i(t) = \Lambda_i(t-1) -\chi_i(t)= \sum_{r=1}^N [{\widetilde {\bf W}}^{t-1}]_{ir} (1-W_{rb}) = 
\sum_{r=1}^N\sum_{r=s}^N [{\widetilde {\bf W}}^{t-1}]_{ir} W_{rs}(1-\delta_{s,b}) = 
\sum_{s=1}^N [{\widetilde {\bf W}}^t]_{is} 
$$
which also corresponds to the averaging of (\ref{important_feature}).
The averaging of (\ref{unity_feature}) retrieves (\ref{survival_monkey}), and letting $t\to \infty$, with the aid of (\ref{first_hitting_probability}), takes us to 
\beq
\label{remarkable_relation}
\sum_{t=1}^{\infty} F_{ib}(t) =  \left[(\mathbf{1}-{\widetilde {\bf W}})^{-1} \cdot \mathbf{W}\right]_{ib} = 1 , 
\hspace{1cm} i =1 ,\ldots, N. 
\eeq
This tells us that $b$ is eventually hit with probability one, reconfirming the well-known feature of the recurrence of Markovian walks in finite (ergodic) networks 
\cite{MiRia2017,fractionalbook-2019,Kac1947,Lefebvre2007}. Further, consider that, in order to arrive at (\ref{remarkable_relation}), we implicitly used $\rho({\widetilde {\bf W}}) <1$
as $\Lambda_i(\infty)=0$ for all $i=1,\ldots,N$.
We can easily double check the  correctness of (\ref{remarkable_relation}):
$$
\sum_{k=1}^N \sum_{r=1}^N[\mathbf{1} -{\widetilde {\bf W}}]_{ik}[\mathbf{1}-{\widetilde {\bf W}})^{-1}]_{kr}W_{rb} =
\sum_{r=1}^N \delta_{ir}W_{rb} = W_{ib} = 1-\sum_{k=1}^N{\widetilde W}_{kb} .
$$
Then, recall the MFHT to node $b$. Considering  (\ref{first_hitting_probability}) we have
\beq
\label{MFHT_b}
\begin{array}{clr}
\ds \langle T_{ib} \rangle & = \ds \sum_{t=1}^{\infty} t \, \chi_i(t) =    \ds \sum_{t=1}^{\infty} t \, [{\widetilde {\bf W}}^{t-1}\cdot \mathbf{W}]_{ib} & \\ \\
 & = \ds [(\mathbf{1}-{\widetilde {\bf W}})^{-2}\cdot \mathbf{W}]_{ib} = \sum_{k=1}^N \hspace{0.25cm} [(\mathbf{1}-{\widetilde {\bf W}})^{-1}]_{ik} \, [(\mathbf{1}-{\widetilde {\bf W}})^{-1}\cdot \mathbf{W}]_{kb}  & \\ \\
 & = \ds [(\mathbf{1}-{\widetilde {\bf W}})^{-1}]_{i} &
 \end{array}
\eeq
which, considering also (\ref{remarkable_relation}), recovers our previous result (\ref{MFPT_non_Markov}).

Finally, to establish the connection with (\ref{first-passage_proba}), 
we need the conditional sample path average of $t$ steps hitting $b$ at the step $k \leq t$ for the first time 
and ending again on node $b$ 
\beq
\label{cond_av}
\big\langle f(r_1, \ldots, r_k; b) \, \delta_{r_t,b} \big\rangle = [{\widetilde {\bf W}}^{k-1}\cdot \mathbf{W}]_{ib} 
[\mathbf{W}^{t-k}]_{bb} =  F_{ib}(k) [\mathbf{W}^{t-k}]_{bb}  , \hspace{1cm} t \in \mathbb{N}.
\eeq
Here we have used that $[\mathbf{W}^{t-k}]_{bb} =$ is the conditional sample path average of the indicator function of the transitions $b \to b$ in $t-k$ steps, corresponding to the Markov property.
The summation of (\ref{cond_av}) from $k=1$ to $t$ covers all sample paths starting from $i$, ending on $b$ of $t$ steps,
containing $b$ at least once. The resulting quantity can therefore be identified with the transition probability from $i$ to $b$ in $t$ steps:
\beq
\label{final_eq}
[\mathbf{W}^t]_{ib} = \sum_{k=1}^t F_{ib}(k) [\mathbf{W}^{t-k}]_{bb} , \hspace{1cm} t \geq 1 
\eeq
holding for Makovian walks.
We identify this equation with (\ref{first-passage_proba}) 
if we use $F_{ib}(0)=0$ and the initial condition $[\mathbf{W}^0]_{ib} =\delta_{ib}$.

These considerations show that the first hitting statistics
with the classical approach 
of Section \ref{first_passage} and that with killing t-nodes of Section \ref{FPH-quantities} are equivalent for Markovian walks. Note, the latter does not require the Markov property of the walk and can be appropriately adapted to a non-Markovian resetting framework.

\subsection{Arbitrary t-node sets}
Consider now a set ${\cal B}$ of (multiple) t-nodes.
The survival IF of a sample path of $t$ steps $r_1 \to r_2 \to \ldots \to r_t$ then writes 
\beq
\label{suvival_of_walker}
h(r_1, \ldots , r_t ; {\cal B}) =\prod_{b\in {\cal B}} h(r_1,\ldots,r_t; b) = 
(1- \Theta(r_1,{\cal B}))\ldots  (1- \Theta(r_t,{\cal B})) , \hspace{1cm} t \geq 1
\eeq
where one has $1-\Theta(r,{\cal B}) = \prod_{b \in {\cal B}} (1-\delta_{r,b})$ and recall $\Theta(r, {\cal B}) = \sum_{b\in {\cal B}} \delta_{r,b}$.
The FHT IF then reads
\beq
\label{FHT_IF_multiple_nodes}
f(r_1, \ldots , r_t ; {\cal B}) = h(r_1, \ldots , r_{t-1} ; {\cal B}) \Theta(r_t,{\cal B})
\eeq
where $h(r_1, \ldots , r_{t-1} ; {\cal B}) - f(r_1, \ldots , r_t ; {\cal B}) = h(r_1, \ldots , r_t ; {\cal B})$ is the counterpart to (\ref{important_feature}). With these ingredients, the appropriate distributions governing the first hitting statistics of the t-node set ${\cal B}$ can straightforwardly be derived. 
\section{Some derivations related to the SP}
\label{AWR_derivations}
Let us prove the connection of the GF (\ref{Gf_transition matrix}) with the GF of the SP 
(\ref{Gf_transition matrix_AW}). Consider the matrix function (\ref{Gf_transition matrix_AW}) 
for the arguments $\mathbf{W},\mathbf{R}$ and evaluate
\beq
\label{derive}
\begin{array}{clr}
\ds \frac{{\bf 1}}{ \mathbf{1} -  u {\bar g}(u\mathbf{W}) \cdot \mathbf{R}} &= \ds \frac{{\bf 1}}{ \mathbf{1} - 
 \mathbf{R} {\bar \psi}(u)} = \ds \mathbf{1}+ \frac{ \mathbf{R} {\bar \psi}(u)}{ \mathbf{1} - 
 \mathbf{R} {\bar \psi}(u)}  & \\ \\
  & = \ds \mathbf{1} + \mathbf{R} \sum_{n=1}^{\infty} [{\bar \psi}(u)]^n = \mathbf{1} + \frac{{\bar \psi}(u)}{1-{\bar \psi}(u)}\mathbf{R} &
  \end{array}
\eeq
as $ u {\bar g}(u\mathbf{W}) \cdot \mathbf{R} = {\bar \psi}(u) \mathbf{R}$ and $\mathbf{R}^n=\mathbf{R}$ for $n \geq 1$. Hence, 
we retrieved (\ref{Gf_transition matrix}).

Now let us derive the renewal equation (\ref{renewal_AWR}). To that end we rewrite the GF (\ref{Gf_transition matrix_AW}) as follows:
\beq
\label{rewrite_AWR_GF}
{\bar {\bf P}}_{\text{AWR}}({\widetilde {\bf W}}, {\widetilde {\bf R}} ;u )  =  
\frac{{\mathbf 1} -  {\bar \psi}(u {\widetilde {\bf W}})}{{\mathbf 1} - u{\widetilde {\bf W}} } +
 u {\bar g}(u\mathbf{W}) \cdot  {\widetilde {\bf R}}
\cdot {\bar {\bf P}}_{\text{AWR}}({\widetilde {\bf W}}, {\widetilde {\bf R}} ;u )
\eeq
Inverting this relation we obtain (\ref{renewal_AWR}).
It is also instructive to consider (\ref{Gf_transition matrix_AW}) in the case of geometric resets. This yields
\beq
\label{Bernoulli_AWR}
{\bar {\bf P}}_{\text{AWR}}({\widetilde {\bf W}}, {\widetilde {\bf R}} ;u ) = [\mathbf{1} -u(q{\widetilde {\bf W}} +p {\widetilde {\bf R}})]^{-1}
\eeq
and this leads to
\beq
\label{Bernoulli_reset_AWR_propagator}
{\bf P}_{\text{AWR}}({\widetilde {\bf W}}, {\widetilde {\bf R}} ; t ) = (q{\widetilde {\bf W}} +p {\widetilde {\bf R}})^t
\eeq
which shows Markovianity of the process. By replacing ${\widetilde {\bf W}}$ and ${\widetilde {\bf R}}$ with $\mathbf{W}$ and $\mathbf{R}$ in (\ref{Bernoulli_reset_AWR_propagator}) we retrieve the propagator (\ref{equival}).

\section{Sibuya distributed resetting time intervals}
\label{standard_Sib}
Here we are concerned of the class of walks in which the time between consecutive resets is fat-tailed.
In particular, we consider the case of a resetting PDF characterized by a power-law tail:
$\psi_{\alpha}(t) \propto t^{-\alpha-1}$ for large $t$ and exponent $\alpha \in (0,1)$. 
As a prototypical example of this class, we consider Sibuya distributed times between consecutive resets. The Sibuya PDF reads 
\cite{PachonPolitoRicciuti2021,Sibuya1979}
\beq
\label{Sibuya}
\psi_{\alpha}(t) = \frac{(-1)^{t-1}}{t!} \alpha(\alpha-1)\ldots (\alpha-t+1) = 
\frac{\alpha \Gamma(t-\alpha)}{\Gamma(1-\alpha)\Gamma(t+1)}
,\hspace{0.5cm} t=1,2,\ldots, \qquad \alpha \in (0,1)
\eeq
with $\psi_{\alpha}(0)=0$
and
$\ds \psi_{\alpha}(t) \sim \alpha t^{-\alpha-1}/\Gamma(1-\alpha) $,  $t\to \infty$.
It has GF
\beq
\label{Sibuya_GF}
{\bar \psi}_{\alpha}(u)=1-(1-u)^{\alpha} , \hspace{1cm} |u| \leq 1 .
\eeq
The probability $\Phi_{\alpha}^{(0)}(t)$ that no reset happens up to and including time $t$
has GF ${\bar \Phi}_{\alpha}^{(0)}(u)= (1-u)^{\alpha-1}$, $|u|< 1$, and reads
\beq
\label{persistence}
\Phi_{\alpha}^{(0)}(t) = \frac{\Gamma(t+1-\alpha)}{\Gamma(1-\alpha)\Gamma(t+1)} \qquad \text{with }\Phi_{\alpha}^{(0)}(t)\sim \frac{t^{-\alpha}}{\Gamma(1-\alpha)} ,\hspace{0.5cm}  t \to \infty.
\eeq
Further, the double GF (\ref{GF_FTV}) reads
\beq
\label{Sibuya_double_f}
{\bar f}_{\alpha}(u,v) = (1-u)^{-\alpha}(1-uv)^{\alpha-1} .
\eeq
Its inversion with respect to $v$ gives $(1-u)^{-\alpha} u^b \Phi_{\alpha}^{(0)}(b)$.
Then, setting ${\bar h}_{\alpha}(u) = (1-u)^{-\alpha}$, we obtain 
$h(_{\alpha}t) = \Gamma(t+\alpha)/[(\Gamma(\alpha)\Gamma(t+1)]$
leading to the formula (see (\ref{ftB}))
\beq
\label{f-alpha-t-B}
f_{\alpha}(t,b) = h_{\alpha}(t-b) \Phi_{\alpha}^{(0)}(b) = \frac{\Gamma(t-b+\alpha)}{\Gamma(\alpha)\Gamma(t-b+1)} \, \frac{\Gamma(b+1-\alpha)}{\Gamma(1-\alpha)\Gamma(b+1)} , \hspace{1cm} b  \leq t
\eeq
with $f_{\alpha}(t,b)=0$ for $b>t$ (as $h_{\alpha}(\tau)$ is causal and supported on 
$\mathbb{N}_0$, which makes sense as $B_{n,t}=t-J_n \leq t$). Note, we retrieve (\ref{persistence}) for $b=t$. 
Using causality of $h_{\alpha}(\tau)$ we can write down the GF of (\ref{f-alpha-t-B}) 
with respect to $b$, obtaining
\beq
\label{GF_V}
{\bar f}_{\alpha}(t,v) = \sum_{b=0}^t h_{\alpha}(t-b) \Phi_{\alpha}^{(0)}(b) v^b = 
\sum_{b=0}^t \frac{\Gamma(t-b+\alpha)}{\Gamma(\alpha)\Gamma(t-b+1)} \, \frac{\Gamma(b+1-\alpha)}{\Gamma(1-\alpha)\Gamma(b+1)} v^b .
\eeq
The expansion stops at $b=t$ and is covered by (\ref{Gf_B}) if we recognize that 
$h_{\alpha}(t) = \delta_{t0}+ {\cal R}_{\alpha}(t)$,
where ${\cal R}_{\alpha}(t)$ is the resetting rate of the Sibuya RRP (with ${\cal R}_{\alpha}(0)=0$), see formula \eqref{resettying_sib} below.
Interesting is the asymptotics $t \gg b \gg 1$:
\beq
\label{Dynkin}
f_{\alpha}(t,b) \sim \frac{t^{\alpha-1}}{\Gamma(\alpha)} \frac{b^{-\alpha}}{\Gamma(1-\alpha)}.
\eeq
This relation coincides with the one reported for continuous time: see 
\cite{Barkai-etal-2023} (Eq.\ (12) in that paper --- recall Euler’s 
reflection formula for the gamma function)
and \cite{GodrecheLuck2001,Dynkin1955}.
The Sibuya resetting rate (see (\ref{resetting_rate})) has GF
\beq
\label{resettying_sib_GF}
{\bar {\cal R}}_{\alpha}(u) = (1-u)^{-\alpha} -1 = {\bar h}_{\alpha}(u)- 1 .
\eeq
Its inversion leads to the expression
\beq
\label{resettying_sib}
 {\cal R}_{\alpha}(t) = h_{\alpha}(t) -\delta_{t0}  = -\delta_{t0} + \frac{\Gamma(\alpha+t)}{\Gamma(\alpha)\Gamma(t+1)}
\eeq
with ${\cal R}_{\alpha}(0) =0$ and the large time asymptotics ${\cal R}_{\alpha}(t) \sim t^{\alpha-1}/\Gamma(\alpha)$. The dependence of ${\cal R}_{\alpha}(t)$ with respect to $\alpha$ is shown in 
Fig.\ \ref{Sib_resetting_rate} for some values of $t$. Note that $0 \leq {\cal R}_{\alpha}(t) \leq 1$ which follows by considering that 
${\cal R}_{\alpha}(t) = \langle {\cal N}_{\alpha}(t)-{\cal N}_{\alpha}(t-1) \rangle $ where ${\cal N}_{\alpha}(t)$ denotes the Sibuya counting renewal process. Recall, we have
 ${\cal N}_{\alpha}(t)-{\cal N}_{\alpha}(t-1) \in \{0,1\}$,  $t\geq 1$, which holds true for any discrete-time renewal resetting rate.
In particular, for $\alpha \to 1-$, one has ${\cal R}_{1-}(t) =1$,  $t\geq 1$ corresponding to the trivial counting process where ${\cal N}_{\alpha}(t)=t$ almost surely.

\section{Comparison of the AW with absorbing Markov chains}
\label{AW_AMC}
Here we establish the connection of the AW where t-nodes are killing the walker with that of a corresponding
absorbing Markov chain (AMC) where the walker is absorbed (trapped) once reaching a t-node.
We emphasize that we only consider the Markovian setting in which we have either no resetting, then the transition matrix
is $\mathbf{W}$ or the case of Bernoulli resetting where the transition matrix of the associated walk becomes $q\mathbf{W}+p\mathbf{R}$.

Consider first the auxiliary walk of the killing t-nodes approach.
Then we have for the AW transition matrix (\ref{transition_mat_mod}) the representation
\beq
\label{absorbing_chain}
{\widetilde {\bf W}} = \left[ \begin{array}{cl} \mathbf{Q} \,\, ; & \mathbf{0} \\
\mathbf{q} \,\, ; & \mathbf{0} \end{array} \right]
\eeq
where we changed the node labeling such that $i=1,\ldots , N-n_b$ 
label the nodes which are not t-nodes (the transient nodes in the AMC picture) and $i=N-n_b+1,\ldots , N$ label the t-nodes where $n_b$ denotes the number of t-nodes. 
In (\ref{absorbing_chain}) we introduced the
$ (N-n_b) \times  (N-n_b)$ matrix $\mathbf{Q}$ defined by
\beq
\label{Q_matrix}
\mathbf{Q} = [\mathbf{1} - \mathbf{\Theta}({\cal B})] \cdot \mathbf{W} \cdot [\mathbf{1} - \mathbf{\Theta}({\cal B})] 
\eeq
where $\mathcal{B}$ is the set of t-nodes.
This matrix is generated by setting to zero all columns and rows of $\mathbf{W}$
which label t-nodes.
The matrix $\mathbf{q}$ is a $n_b \times (N-n_b)$ matrix and is given by
\beq
\label{small_q_mat}
\mathbf{q}= \mathbf{\Theta}({\cal B}) \cdot \mathbf{W} \cdot [\mathbf{1} - \mathbf{\Theta}({\cal B})].
\eeq
The matrix elements of $\mathbf{Q}$ constitute the part of ${\widetilde {\bf W}}$ for which the departure nodes are not t-nodes, and $\mathbf{q}$ the part for which they are t-nodes. 
Then, the AW transition matrix for $t$ steps is 
\beq
\label{absorbing_chain_t_steps}
{\widetilde {\bf W}}^t = {\widetilde {\bf W}} \cdot \mathbf{Q}^{t-1}=\left[ \begin{array}{cl} \mathbf{Q}^t \,\, ;& \mathbf{0} \\
\mathbf{q}\cdot \mathbf{Q}^{t-1} \,\, ;& \mathbf{0} \end{array} \right] ,\hspace{1cm} t \geq 1 .
\eeq
The matrix element $[\mathbf{Q}^t]_{ij}$ denotes the probability that the walker arrives alive at time $t$ on node $j$  where neither the departure node
$i$ nor the arrival node $j$ are t-nodes. On the other hand, $[\mathbf{q}\cdot 
\mathbf{Q}^{t-1}]_j$ is the probability that the walker arrives alive at $j \notin {\cal B}$ but having started its walk from a t-node.

The survival probability of the walker at time $t$ is $\Lambda_i(t)=\sum_{j=1}^N[\mathbf{Q}^t]_{ij}$ for departure nodes which are not t-nodes $i \notin {\cal B}$,  and
$\Lambda_i(t) = \sum_{j=1}^N[\mathbf{q}\cdot \mathbf{Q}^{t-1}]_j $ for departure nodes that are t-nodes $i \in {\cal B}$. 
This includes (see Eq. (\ref{MFPT_non_Markov})) the MFPT in which the matrix
\beq
\label{MFPT_comp}
[\mathbf{1} -{\widetilde {\bf W}}]^{-1} = 
\left[ \begin{array}{cl} [\mathbf{I}'-\mathbf{Q}]^{-1} \,\, ;& \mathbf{0} \\
\mathbf{q}\cdot [\mathbf{I}'-\mathbf{Q}]^{-1} \,\, ; & \mathbf{I} \end{array} \right]
\eeq
considers all departure nodes (including t-nodes). We denote here with $\mathbf{I}'$ the $(N-n_b)\times (N-n_b)$ identity matrix.
Keep also in mind that (\ref{absorbing_chain_t_steps}) is defective with $\Lambda_i(\infty)=0$.
Now, we compare this setting with the AMC approach.
 The transition matrix of the corresponding AMC has the canonical form \cite{GrinsteadSnell2006}
\beq
\label{AMC_trans_mat}
\mathbf{V} = \left[ \begin{array}{cl} \mathbf{Q} \,\, ;& \mathbf{r} \\
\mathbf{0} \,\, ; & \mathbf{I} \end{array} \right]
\eeq
where $ \mathbf{Q}$ is again defined by (\ref{Q_matrix}) and 
$\mathbf{r} = [\mathbf{1} - \mathbf{\Theta}({\cal B})] \cdot \mathbf{W} \cdot \mathbf{\Theta}({\cal B})$ contains the probabilities 
to hop from nodes which are not t-nodes to t-nodes 
in one step.  $\mathbf{I}$ is the $n_b \times n_b$ identity matrix which ensures that the walker is absorbed
once hopping on a t-node. Differently from (\ref{absorbing_chain}), 
the AMC transition matrix is per construction row-stochastic. The AMC transition matrix of $t$ steps reads
\beq
\label{t_step_AMC_trans_mat}
\mathbf{V}^t = \left[ \begin{array}{cl} \mathbf{Q}^t \,\, ;& \left(\mathbf{1}- \mathbf{Q}\right)^{-1}\cdot\left(\mathbf{1}- \mathbf{Q}^{\, t}\right) \cdot \mathbf{r} \\ 
\mathbf{0} \,\, ;& \hspace{2cm} \mathbf{I} \end{array} \right] , \hspace{1cm} t \geq 0
\eeq
and contains the matrix $\mathbf{Q}^t$ but not the matrix $[\mathbf{q}\cdot \mathbf{Q}^{t-1}]$ of (\ref{absorbing_chain_t_steps}) which captures the first hitting statistics for departures from t-nodes. 
Moreover, the AMC approach ignores the statistics of the first hits with departures from t-nodes such that the statistics of first returns to t-nodes: an information which is captured by the approach of killing t-nodes.

\section{Derivation of (\ref{ftB}) and (\ref{Gf_B})}
\label{derivations_18_19}
Here we derive (\ref{ftB}) and (\ref{Gf_B}) in details. 
We start by considering the double GF 
${\bar f}(u,v) = \sum_{t=0}^{\infty} \sum_{b=0}^{\infty} f(t,b) u^tv^b$ which is given by
$$
{\bar f}(u,v)= \frac{1}{1-{\bar \psi}(u)} \frac{1-{\bar \psi}(uv)}{1-uv} = 
\left( 1 + \frac{ {\bar \psi}(u) }{1-{\bar \psi}(u)}   \right)  \frac{1-{\bar \psi}(uv)}{1-uv} =\left(1+{\bar R}(u)\right){\bar \Phi}^{(0)}(uv)
$$
where ${\bar \Phi}^{(0)}(u)$ is GF of the persistence probability $\Phi^{(0)}(t)$ and ${\bar {\cal R}}(u)$ of the resetting rate defined by (\ref{resetting_rate}).
Then, the inversion of ${\bar \Phi}^{(0)}(u v)$ with respect to $v$ yields $ u^b\Phi^{(0)}(b)$. Hence, the inversion of ${\bar f}(u,v)$ with respect to
$v$ gives
$$
{\tilde {\bar f}(}u,b) = \left(1+{\bar {\cal R}}(u)\right) u^b \Phi^{(0)}(b).
$$
The inversion with respect to $u$ of $u^b{\bar h}(u)$ leads to $h(t-b)$. Therefore, inverting 
$\left(1+{\bar R}(u)\right) u^b$ with respect to $u$ takes us to $\delta_{tb}+{\cal R}(t-b)$. Finally, inverting
${\tilde {\bar f}}(u,b)$ with respect to $u$ yields to Eq. (\ref{ftB}):
$$
f(t,b) = \left(\delta_{tb}+{\cal R}(t-b)\right) \Phi^{(0)}(b) .
$$
To obtain (\ref{Gf_B}), we need to account for ${\cal R}(\tau) = 0$  for $\tau \leq 0$  
which follows from the causality of $\psi(\tau)$ which is null
for $\tau \leq 0$. Keeping this in mind we take from $f(t,b)$ the GF with respect to $b$ 
which takes us to Eq. (\ref{Gf_B}): 
$$
\bar{f}(t,v) = \sum_{b=0}^t \left(\delta_{tb}+{\cal R}(t-b)\right) \Phi^{(0)}(b) v^b =
 \Phi^{(0)}(t) v^t +  \sum_{b=0}^t  {\cal R}(t-b) \Phi^{(0)}(b) v^b .
$$
%
%

%
%

%
%

\end{document}